\numberwithin{equation}{section}
\newtheorem{theorem}{Theorem}[section]
\newtheorem{proposition}[theorem]{Proposition}
\newtheorem{corollary}[theorem]{Corollary}
\newtheorem{lemma}[theorem]{Lemma}
\newtheorem{fact}[theorem]{Fact}
\theoremstyle{definition}
\newtheorem{definition}[theorem]{Definition}
\newtheorem{remark}[theorem]{Remark}
\newtheorem{notation}[theorem]{Notation}
\newtheorem{example}[theorem]{Example}
\newcommand{\tr}{\mathrm{tr}}
\newcommand{\C}{\mathbb{C}}
\newcommand{\rar}{\rightarrow}
\begin{document}
\title[A short note on relative entropy]{A short note on relative entropy for a pair of intermediate subfactors}
\subjclass[2010]{46L37}

\author[K C Bakshi]{Keshab Chandra Bakshi} \address{Chennai
  Mathematical Institute, Chennai, INDIA} \email{bakshi209@gmail.com, kcbakshi@cmi.ac.in}

\thanks{The first named author was supported through a DST
  INSPIRE Faculty grant (reference  no. DST/INSPIRE/04/2019/002754).}
\date{9 May, 2021}
\maketitle
\begin{abstract}
 Given a quadruple of finite index subfactors we explicitly compute the Pimsner-Popa probabilistic constant for the pair of intermediate subfactors and relate
 it with the corresponding Connes-St\o rmer relative entropy between them. This generalizes an old result of Pimsner and Popa.
\end{abstract}

\section{Introduction}
Generalizing the classical notion of (conditional) entropy from ergodic theory, Connes and St\o rmer in \cite{CS} defined a relative entropy
$H(B_1|B_2)$ between a pair of finite dimensional $C^*$-subalgebras of a finite von Neumann algebra $M$ with a fixed faithful normal trace $\tr$. Using this they obtained an appropriate  definition of the entropy of an automorphism in the non-commutative framework of operator algebras. As an application, they proved that for $n\neq m$ the $n$-shift of the hyperfinite $II_1$ factor is not conjugate to the $m$-shift. Furthermore, they also proved a Kolmogorov-Sinai type theorem using the relative entropy as the main technical tool. However, in an impactful paper \cite{PP}, Pimsner and Popa
had observed that the definition of the relative entropy does not depend on $B_1,B_2$ being finite dimensional, so that one may also consider the relative entropy
$H(B_1|B_2)$ for arbitrary von Neumann subalgebras $B_1,B_2\subset M$. In another direction, as a generalization of the index of a subgroup in a group, Jones in a groundbreaking paper \cite{Jo} introduced a notion of index $[M:N]$ as the Murray-von Neumann’s coupling constant $\text{dim}_{N}(L^2(M))$  for any subfactor $N\subset M$ of type $II_1$. Quite surprisingly, Pimsner and Popa had discovered that if $N$ and $M$ are type $II_1$ factors and $N\subset M$ then $H(M|N)$ depends on both the Jones index  and the relative commutant. More precisely, 
they proved (among other things) that if the relative commutant is trivial, that is $N^{\prime}\cap M=\C$, then
\begin{equation}
 \label{pimsnerpopa1}
 H(M|N)=\log [M:N].
\end{equation}
In fact, Pimsner and Popa had shown that the equality holds in \Cref{pimsnerpopa1} if and only if the subfactor $N\subset M$ is extremal.

Given the fact that subfactor theory deals with the relative position of a subfactor inside an ambient factor, it is a very natural and fundamental question to consider relative positions of multiple subfactors. For an irreducible subfactor with finite Jones index the set of its intermediate subfactors forms a finite lattice under two natural operations. By a Galois correspondence, every interval sublattice of finite groups can be realized as intermediate subfactor lattice of finite index and thereby the study of intermediate subfactor lattice of finite index may be thought of as a natural generalization of the study of interval sublattice of finite groups. Realization of even some simple finite lattices as intermediate (irreducible) subfactor lattices remains entirely open. We want to understand this lattice better. To this end, recently in \cite{BDLR},  we discovered a new notion of angle between two intermediate subfactors $P$ and $Q$ of the finite index subfactor $N\subset M$ of type $II_1$ factors. In this paper we have taken yet another approach by investigating the Connes-Stormer relative entropy and it looks like more calculable in the non-irreducible setting. We consider a pair of intermediate subfactors
$N\subset P,Q\subset M$ of a finite index subfactor $N\subset M$ of type $II_1$ factors with $N^{\prime}\cap M=\C$ (and thus form a quadruple which we denote by $(N,P,Q,M)$), and obtain a formula for $H(P|Q)$  in terms of a probabilistic number $\lambda(P,Q)$ due to Pimsner and Popa.

For von Neumann subalgebras $B_2\subset B_1\subset M$ of a finite von Neumann algebra $M$, Pimsner and Popa, in \cite{PP}, defined  the probabilistic constant 
\begin{equation}\label{probabilistic index}\lambda(B_1,B_2)=\text{max}\{\lambda>0|E_{B_2}(x)\geq \lambda x, x\in B_{1_{+}}\}.
\end{equation}This serves as a replacement of Jones index when $B_1$ and $B_2$ are not necessarily factors. Quite remarkably, they also prove that if
$M$ is a type $II_1$ factor and $N\subset M$ is a subfactor then
\begin{equation}
 \label{pimsnerpopa2}
 {\lambda(M,N)}={[M:N]}^{-1}.
\end{equation}

Interestingly, the definition of $\lambda(B_1,B_2)$, as in  \Cref{probabilistic index}, works for general subalgebras $B_1$ and $B_2$ (not necessarily, $B_2\subset B_1)$ 
of $M$ as well; and furthermore, one may also consider the number $\lambda(B_2,B_1).$ 

As a generalization of \Cref{pimsnerpopa2}, we prove the following result:
\smallskip

\noindent{\bf \Cref{commuting}.} {Let $(N,P,Q,M)$ be a quadruple of type $II_1$ factors with $[M:N]<\infty$. 
 \begin{enumerate}
  \item If $(N,P,Q,M)$ is a commuting square then $\lambda(P,Q)={[P:N]}^{-1}$ and $\lambda(Q,P)={[Q:N]}^{-1}.$
  \item If $(N,P,Q,M)$ is a co-commuting square then $\lambda(P,Q)={[M:Q]}^{-1}$ and $\lambda(Q,P)={[M:P]}^{-1}.$
 \end{enumerate}}
\smallskip
 Moreover, in \Cref{whenequalityholds} and \Cref{whenequalityholds2} we prove that the converse of \Cref{commuting} also holds true for an irreducible quadruple (i.e., $N^{\prime}\cap M=\C$). 

In this article, we also provide some calculable formulae for the probabilistic  numbers in the case of an irreducible quadruple.\smallskip

\noindent{\bf \Cref{imp1}.}{
 Suppose $(N,P,Q,M)$ is a quadruple of $II_1$ factors with $N^{\prime}\cap M=\C$ and $[M:N]<\infty$ and $e_P$ (resp. $e_Q$) is the biprojection corresponding to the intermediate subfactor $P$ (resp. $Q$), then $\lambda(Q,P)= \frac{\text{tr}(e_Pe_Q)}{\text{tr}(e_Q)}$ and
 $\lambda(P,Q)= \frac{\text{tr}(e_Pe_Q)}{\text{tr}(e_P)}.$}
 \smallskip

Finally, we prove the main result of this article by relating Connes-St\o rmer relative entropy $H(P|Q)$ with $\lambda(P,Q)$. 

\smallskip

\noindent{\bf \Cref{main}.}{Let $(N,P,Q,M)$ be an irreducible quadruple such that $[M:N]<\infty$. Then, 
 $H(Q|P)=-\log\big(\lambda(Q,P)\big).$}
 
The above formula generalizes \Cref{pimsnerpopa1}.\smallskip

Combining \Cref{main} with \Cref{imp1} , we deduce the following formula for the relative entropy (in \Cref{imp2}):
 \begin{equation*}
  H(Q|P)= \log\big(\tr(e_Q)\big)-\log\big(\tr(e_Pe_Q)\big).\end{equation*}

We conclude the paper with a cute application of \Cref{main}, where we deduce (in \Cref{prop2}) that if $(N,P,Q,M)$ is an irreducible quadruple with $[M:N]<\infty$ and $[P:N]=2$ then $H(P,Q)$ is either $0$ or $\log 2.$

The paper is organized as follows. After some Preliminaries in Section 2, we discuss the probabilistic constants and present some useful formulae for them in Section 3 and finally, 
in Section 4, we prove our main result involving the Connes-St\o rmer relative entropy.
\section{Preliminaries}
In this section, we fix the notations and recall some results from \cite{BDLR}, which we will use frequently in the sequel.
\begin{notation}\label{n}
In this paper we only deal with separable type $II_1$ factors. Consider a subfactor $N\subset M$ of a type $II_1$ factor $M$ with $[M:N]<\infty$. Throughout we will be dealing only with subfactors of type
$II_1$ with finite Jones index. Thus, $M$ has the unique faithful, normal  tracial state $\tr$.
\begin{enumerate}
\item 
A quadruple $$\begin{matrix}
  Q &\subset & M \cr \cup &\ &\cup\cr N &\subset & P,
\end{matrix}$$ denoted by $(N,P,Q,M)$, is called irreducible if $N^{\prime}\cap M=\C$. Consider the basic
 constructions $N\subset M \subset M_1$, $P \subset M \subset P_1$ and
 $Q \subset M \subset Q_1$. As is standard, we denote by $e_1$ the
 Jones projection $e^M_N$. It is easily seen that, as $II_1$-factors
 acting on $L^2(M)$, both $ P_1$ and $Q_1$ are contained in $ M_1$. In
 particular, if $e_P: L^2(M) \rar L^2(P)$ denotes the orthogonal
 projection, then $e_P \in M_1$. Likewise, $e_Q \in M_1$. Note that, $\tr(e_P)={[M:P]}^{-1}.$ Thus, we
 naturally obtain a dual quadruple $$\begin{matrix} P_1 &\subset & M_1
   \cr \cup &\ &\cup\cr M &\subset & Q_1. 
\end{matrix}$$ We call $(M,Q_1,P_1,M_1)$ the basic construction of $(N,P,Q,M)$. 

\item A quadruple $(N,P,Q,M)$ is called a commuting square if
$E^M_P E^M_Q= E^M_Q E^M_P = E^M_N$.
 A quadruple $(N,P,Q,M)$ is called a co-commuting square if the quadruple $(M,Q_1,P_1,M_1)$ is a commuting square.

\item Suppose $N_{-1}\subset N\subset M$ is a downward basic construction. Also, denote by $P_{-1}$ (resp. $Q_{-1}$) a downward basic construction of $N\subset P$ (resp. $N\subset Q$) with
the corresponding Jones projection $e^N_{P_{-1}}$ (resp. $e^N_{Q_{-1}}$).
We obtain a new quadruple
$$\begin{matrix} P_{-1} &\subset & N
   \cr \cup &\ &\cup\cr N_{-1} &\subset & Q_{-1}. 
\end{matrix}$$
We call this new quadruple $(N_{-1},Q_{-1},P_{-1},N)$ as a downward basic construction of the quadruple $(N,P,Q,M)$.
\end{enumerate}

\end{notation}

 We recall without proofs the following elementary facts.
\begin{fact}\label{fact}
Consider a quadruple of type $II_1$ factors $(N,P,Q,M)$ with $[M:N]<\infty.$
\begin{enumerate}
 \item  Suppose $(M,Q_1,P_1,M_1)$ is the basic construction of the quadruple and let $e_{P_1} (\text{resp.}~e_{Q_1})$
 be the Jones projection for the inclusion $P_1\subset M_1 (\text{resp.}~ Q_1\subset M_1).$ Then,
 
$$\tr(e_{P_1}e_{Q_1})=\frac{[M:P]}{[Q:N]}\tr(e_Pe_Q)=\frac{[M:Q]}{[P:N]} \tr(e_Pe_Q).$$
\item Suppose $(N_{-1},Q_{-1},P_{-1},N)$ is a downward basic construction of $(N,P,Q,M)$. Then,
$$\tr(e^N_{P_{-1}}e^N_{Q_{-1}})=\frac{[M:Q]}{[P:N]}\tr(e_Pe_Q)=\frac{[M:P]}{[Q:N]} \tr(e_Pe_Q).$$

\item $[P_{-1}:N_{-1}]=[M:P]$ and $[Q_{-1}:N_{-1}]=[M:Q].$
\end{enumerate}

\end{fact}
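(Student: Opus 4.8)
The plan is to prove (3) and the mutual consistency of the two displayed values by index multiplicativity, to prove (1) by a computation in the Jones tower, and to deduce (2) from (1) and (3) by a duality argument, so that no second trace computation is needed. The easy points come first: by multiplicativity of the Jones index $[M:P][P:N]=[M:Q][Q:N]=[M:N]$, so $\frac{[M:P]}{[Q:N]}=\frac{[M:Q]}{[P:N]}$ and only one equality must be proved in each of (1),(2). For (3): since $N_{-1}\subset N$ is the basic construction of $P_{-1}\subset N$ we have $[N:P_{-1}]=[P:N]$, while $[N:N_{-1}]=[M:N]$; multiplicativity along $N_{-1}\subset P_{-1}\subset N$ then gives $[P_{-1}:N_{-1}]=[M:N]/[P:N]=[M:P]$, and the statement for $Q$ is identical.

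For (1) I would work in $N\subset M\subset M_1\subset M_2$ with $e_1:=e^M_N$ and $e_2:=e^{M_1}_M$, using that $e_2\in M'$, that $e_2xe_2=E^{M_1}_M(x)e_2$ for $x\in M_1$, and the Markov relation $\tr(xe_2)=[M:N]^{-1}\tr(x)$. Fixing Pimsner--Popa bases $\{p_j\},\{q_l\}$ of $P,Q$ over $N$ gives $e_P=\sum_j p_je_1p_j^{*}$ and $e_Q=\sum_l q_le_1q_l^{*}$; fixing bases $\{n_\mu\},\{m_\kappa\}$ of $M$ over $P,Q$, the families $[M:P]^{1/2}n_\mu e_P$ and $[M:Q]^{1/2}m_\kappa e_Q$ are bases of $P_1,Q_1$ over $M$, so $e_{P_1}=[M:P]\sum_\mu n_\mu e_Pe_2e_Pn_\mu^{*}$ and similarly for $e_{Q_1}$. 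Expanding $\tr(e_{P_1}e_{Q_1})$ and collapsing the two inner $e_2$'s by the pull-through and Markov relations (using also the scalar identity $E^{M_1}_M(e_P)=[M:P]^{-1}$, which holds with no irreducibility hypothesis since pairing against $M$ gives $\tr(E^{M_1}_M(e_P)z)=\tr(e_Pz)=[M:P]^{-1}\tr(z)$ for all $z\in M$) reduces the level-two trace to \[ \tr(e_{P_1}e_{Q_1})=\frac{[M:P][M:Q]}{[M:N]}\sum_{\mu,\kappa}\big\|E^{M_1}_M(e_Pn_\mu^{*}m_\kappa e_Q)\big\|_2^2. \]

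The crux is to identify this $M$-level sum with $\tr(e_Pe_Q)$. I would substitute $e_P=\sum_j p_je_1p_j^{*}$, $e_Q=\sum_l q_le_1q_l^{*}$ and $e_1ye_1=E^M_N(y)e_1$ to obtain $E^{M_1}_M(e_Pn_\mu^{*}m_\kappa e_Q)=[M:N]^{-1}\sum_{j,l}p_jE^M_N(p_j^{*}n_\mu^{*}m_\kappa q_l)q_l^{*}$, then observe that $\{n_\mu p_j\}$ and $\{m_\kappa q_l\}$ are themselves Pimsner--Popa bases of $M$ over $N$. Writing $a=n_\mu p_j$, $b=m_\kappa q_l$ and using the reconstruction identity $\sum_a aE^M_N(a^{*}b)=b$, the multi-index sum telescopes to $[M:N]^{-1}\tr\!\big(e_Pe_Q\sum_b b^{*}b\big)$. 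The last step, and the one needing genuine care, is that this equals $\tr(e_Pe_Q)$: since $e_Pe_Q\in N'\cap M_1$ its expectation $T:=E^{M_1}_M(e_Pe_Q)$ lies in $N'\cap M$, and the averaging identity $\sum_b bTb^{*}=[M:N]E^M_N(T)$ on the relative commutant gives $\tr\!\big(e_Pe_Q\sum_b b^{*}b\big)=\tr\!\big(T\sum_b b^{*}b\big)=[M:N]\tr(E^M_N(T))=[M:N]\tr(e_Pe_Q)$. With $[M:Q][Q:N]=[M:N]$ the display becomes $\tr(e_{P_1}e_{Q_1})=\frac{[M:P]}{[Q:N]}\tr(e_Pe_Q)$, which is (1); conceptually this rescaling is the Fourier/Plancherel duality between the consecutive biprojections $e_P$ and $e_{P_1}$, consistent with $\tr(e_P)\tr(e_{P_1})=[M:N]^{-1}$.

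Finally I would deduce (2) from (1) and (3) by duality, avoiding a fresh computation. The basic construction of the downward quadruple $(N_{-1},Q_{-1},P_{-1},N)$ is again $(N,P,Q,M)$: here $\langle N,e^N_{N_{-1}}\rangle=M$, $\langle N,e^N_{P_{-1}}\rangle=P$ and $\langle N,e^N_{Q_{-1}}\rangle=Q$, so its two second-level projections are precisely $e_Q$ and $e_P$. Applying (1) to $(N_{-1},Q_{-1},P_{-1},N)$ therefore reads $\tr(e_Pe_Q)=\frac{[N:Q_{-1}]}{[P_{-1}:N_{-1}]}\tr(e^N_{P_{-1}}e^N_{Q_{-1}})$; since $[N:Q_{-1}]=[Q:N]$ and $[P_{-1}:N_{-1}]=[M:P]$ by (3), this rearranges to $\tr(e^N_{P_{-1}}e^N_{Q_{-1}})=\frac{[M:P]}{[Q:N]}\tr(e_Pe_Q)$, which is (2).
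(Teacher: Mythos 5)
Your proposal cannot be checked against the paper's own argument, because the paper states this Fact ``without proofs'' (it is recalled from \cite{BDLR}); so it must stand on its own. Much of it does: item (3) by index multiplicativity, the basis formula $e_{P_1}=[M:P]\sum_\mu n_\mu e_Pe_2e_Pn_\mu^{*}$, the collapse of $\tr(e_{P_1}e_{Q_1})$ to $\frac{[M:P][M:Q]}{[M:N]^{2}}\,\tr\big(e_Pe_Q\sum_b b^{*}b\big)$ with $b=m_\kappa q_l$, and the derivation of (2) from (1) and (3) via the observation that the basic construction of $(N_{-1},Q_{-1},P_{-1},N)$ is again $(N,P,Q,M)$ --- all of this is correct. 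The gap is exactly the step you flag as needing ``genuine care'': the averaging identity $\sum_b bTb^{*}=[M:N]E^M_N(T)$ for $T\in N^{\prime}\cap M$.

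That identity is not a general fact; it is \emph{equivalent to extremality} of $N\subset M$, which is not among the hypotheses (the Fact assumes only $[M:N]<\infty$; irreducibility is deliberately absent here). For $T\in N^{\prime}\cap M$ one checks that $\sum_b bTe_1b^{*}$ acts on $L^2(M)$ as right multiplication by $T$, i.e.\ equals $JT^{*}J$; hence
$$\sum_b bTb^{*}=[M:N]\,E^{M_1}_M\big(JT^{*}J\big)=[M:N]\,\tr_{M_1}\big(JT^{*}J\big)1,$$
since $JT^{*}J\in M^{\prime}\cap M_1$ and $M^{\prime}\cap M=\C 1$. As $E^M_N(T)=\tr_M(T)1$ (because $N$ is a factor), your identity asserts $\tr_{M_1}(JT^{*}J)=\tr_M(T)$ on $N^{\prime}\cap M$, which is precisely the Pimsner--Popa characterization of extremality, and it fails for non-extremal inclusions: in their standard example $N=\{x+\theta(x):x\in pMp\}$ with $\tr(p)=t\neq\tfrac12$, the projection $p$ lies in $N^{\prime}\cap M$ with $\tr_M(p)=t$ while $\tr_{M_1}(JpJ)=1-t$. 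Worse, for your particular $T=E^{M_1}_M(e_Pe_Q)$ the needed equality $\tr\big(e_Pe_Q\sum_b b^{*}b\big)=[M:N]\tr(e_Pe_Q)$ is, by your own (correct) reduction, \emph{equivalent} to item (1) itself; so at its crux the argument assumes the conclusion in the guise of a standard identity. Your proof is therefore valid whenever $N\subset M$ is extremal --- in particular when $N^{\prime}\cap M=\C$, where $T$ is a scalar and the step is trivial, and this is in fact the only setting in which the paper ever uses items (1) and (2) --- but as a proof of the Fact in the stated generality it has a genuine hole, and closing it is exactly the question of whether these trace identities survive non-extremality.
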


\begin{definition}\cite[Definition 2.17]{BDLR}\label{auxiliary}
Consider a quadruple $(N,P,Q,M)$ as in \Cref{fact}. Let $\{\lambda_i:i\in I\}$ and
$\{\mu_j:j\in J\}$ be (right) Pimsner-Popa bases for $P/N$ and $Q/N$,
respectively. Define two auxiliary opertors $p(P,Q)$ and $p(Q,P)$
 as follows:
\[
p(P,Q)= \sum_{i,j}{\lambda_i}\mu_j e_1 {\mu}^*_j{\lambda}^*_i\quad \text{and}\quad
p(Q,P)= \sum_{i,j}\mu_j \lambda_i e_1 {\lambda}^*_i {\mu}^*_j.
\]\end{definition}
Below we recall from \cite{BDLR} a few important facts about the auxiliary operators $p(P,Q)$ and $p(Q,P)$.
\begin{fact} \label{fact2} Consider an irreducible  quadruple of type $II_1$ factors $(N,P,Q,M)$ with $[M:N]<\infty$. \begin{enumerate}
 \item  $p(P,Q)$ and $p(Q,P)$ are both
independent of choice of bases. See \cite[Lemma 2.18]{BDLR} for details.\smallskip

\item $p(P,Q)= [P:N] E^{N^{\prime}}_{P^{\prime}}(e_Q)=[Q:N]E^{M_1}_{Q_1}(e_P).$ Similarly,
$p(Q,P)= [Q:N] E^{N^{\prime}}_{Q^{\prime}}(e_P)=[P:N]E^{M_1}_{P_1}(e_Q).$ This follows from \cite[Proposition 2.25]{BDLR}. \smallskip

\item By \cite[Proposition
  2.22]{BDLR}, $Jp(P,Q)J = p(Q,P)$, where $J$ is the usual modular
conjugation operator on $L^2(M)$; so that, $\|p(P,Q)\| =
\|p(Q,P)\|$.  \smallskip

\item $\lambda:= \lVert p(P,Q)\rVert=\lVert p(Q,P)\rVert.$ We note that $\lambda=[M:N] \tr(e_Pe_Q)$. 

Indeed, by 
\cite[Remark 3.3]{BDLR}, first observe that $\lambda=[M:Q] {\tr}\big(p(P,Q)e_Q\big)$ and then, by the proof of \cite[Proposition 3.5]{BDLR}, we note that ${\tr}\big(p(P,Q)e_Q\big)=[Q:N] {\tr}(e_Pe_Q).$ 
\end{enumerate}
\end{fact}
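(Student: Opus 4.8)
The plan is to prove all four assertions by passing from the basis-dependent definition in \Cref{auxiliary} to basis-free descriptions; once the conditional-expectation formulae of item (2) are in hand, item (1) is automatic, and items (3)--(4) follow with modest additional work.

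The first step is to collapse one of the two bases. Using the reconstruction property of the right Pimsner--Popa basis $\{\mu_j\}$ for $Q/N$, one checks on vectors that $\sum_j \mu_j e_1 \mu_j^{*}$ acts as $E^M_Q$ on $M\subset L^2(M)$ and kills $L^2(Q)^{\perp}$, whence
\[
\sum_j \mu_j\, e_1\, \mu_j^{*}=e_Q,\qquad\text{and symmetrically}\qquad \sum_i \lambda_i\, e_1\, \lambda_i^{*}=e_P.
\]
Substituting the first identity into \Cref{auxiliary} gives the compact form $p(P,Q)=\sum_i \lambda_i e_Q \lambda_i^{*}$. I would then recognize the averaging map $x\mapsto \sum_i \lambda_i x \lambda_i^{*}$ on $N^{\prime}$ as the scalar multiple $[P:N]$ of the conditional expectation onto $P^{\prime}$, which rewrites $p(P,Q)$ as $[P:N]\,E^{N^{\prime}}_{P^{\prime}}(e_Q)$; re-associating the two bases in the other order identifies the same operator with $[Q:N]\,E^{M_1}_{Q_1}(e_P)$. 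This is precisely item (2), and since both right-hand sides are manifestly independent of the chosen bases, item (1) drops out simultaneously.

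For item (3) I would conjugate by the modular conjugation $J$, using $J^2=1$, $Je_1 J=e_1$, and $J(ab)J=(JaJ)(JbJ)$, which carries $p(P,Q)$ into the commutant picture; the interplay of $J$ with the two Pimsner--Popa bases then identifies the result with $p(Q,P)$, and anti-unitarity of $J$ gives $\|p(P,Q)\|=\|p(Q,P)\|$. For item (4) the norm equality is this last statement, so it remains to evaluate $\lambda$. I would split this into two steps. The clean step is $\tr\big(p(P,Q)e_Q\big)=[Q:N]\,\tr(e_P e_Q)$: writing $p(P,Q)=[Q:N]\,E^{M_1}_{Q_1}(e_P)$ and using that $e_Q\in Q_1$ together with the $Q_1$-bimodularity and trace-preservation of $E^{M_1}_{Q_1}$ gives $\tr\big(E^{M_1}_{Q_1}(e_P)e_Q\big)=\tr\big(E^{M_1}_{Q_1}(e_Pe_Q)\big)=\tr(e_Pe_Q)$. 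The second step is the identity $\|p(P,Q)\|=[M:Q]\,\tr\big(p(P,Q)e_Q\big)$; combining the two and using multiplicativity of the index $[M:Q][Q:N]=[M:N]$ yields $\lambda=[M:N]\,\tr(e_Pe_Q)$.

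The main obstacle is this last identity $\|p(P,Q)\|=[M:Q]\,\tr\big(p(P,Q)e_Q\big)$, which equates an operator norm with a scaled trace pairing and hence cannot follow from positivity alone. I expect the real work to lie in analyzing the top of the spectrum of the positive element $p(P,Q)\in Q_1$ and in showing that the Jones projection $e_Q$ (of trace $[M:Q]^{-1}$) detects the maximal eigenvalue, so that the normalized state $\tr(e_Q\,\cdot\,)/\tr(e_Q)$ attains $\|p(P,Q)\|$ on $p(P,Q)$. This is exactly where the irreducibility hypothesis $N^{\prime}\cap M=\C$ enters, and it is where I would concentrate the argument.
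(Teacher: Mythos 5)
Your reductions for items (1)--(3) and for half of item (4) are sound, and they in fact supply arguments where the paper merely cites \cite{BDLR}: the collapse $\sum_j \mu_j e_1 \mu_j^{*}=e_Q$ is correct (both sides agree on the dense subspace $\widehat{M}\subset L^2(M)$), the identification of the averaging map $x\mapsto \sum_i \lambda_i x\lambda_i^{*}$ on $N^{\prime}$ with $[P:N]E^{N^{\prime}}_{P^{\prime}}$ is legitimate (one has $\sum_i\lambda_i\lambda_i^{*}=[P:N]$, the map is $P^{\prime}$-bimodular, and $P^{\prime}\subset N^{\prime}$ is irreducible since $P\cap N^{\prime}\subset M\cap N^{\prime}=\C$, so the trace-preserving expectation is unique), and your computation $\tr\big(p(P,Q)e_Q\big)=[Q:N]\tr(e_Pe_Q)$ via $Q_1$-bimodularity and trace preservation of $E^{M_1}_{Q_1}$ is exactly the step the paper attributes to the proof of \cite[Proposition 3.5]{BDLR}.

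The genuine gap is the identity $\lVert p(P,Q)\rVert=[M:Q]\,\tr\big(p(P,Q)e_Q\big)$: you correctly flag it as the crux and correctly guess that irreducibility enters there, but you give no argument for it, only the expectation that one can show $e_Q$ ``detects the maximal eigenvalue.'' That expectation is not self-evidently realizable for a general positive element of $Q_1$, and indeed the actual mechanism is not a spectral analysis from scratch: it is the fact, recalled in the paper as \Cref{bdlr} (from \cite{BDLR}), that under $N^{\prime}\cap M=\C$ the element $\frac{1}{\lambda}p(P,Q)$ is a \emph{projection} which dominates $e_P\vee e_Q$. Granting that, $p(P,Q)e_Q=\lambda e_Q$, so taking traces gives $\tr\big(p(P,Q)e_Q\big)=\lambda\,\tr(e_Q)=\lambda\,[M:Q]^{-1}$, which is precisely your missing identity. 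Establishing the projection property (equivalently, that $e_Q$ lies under the spectral projection of $p(P,Q)$ at $\lambda$) is where all the real work and the irreducibility hypothesis sit, and it requires its own argument; as written, your proposal proves items (1)--(3) and the trace formula, but item (4) --- the formula $\lambda=[M:N]\tr(e_Pe_Q)$ on which \Cref{imp1}, \Cref{whenequalityholds} and ultimately \Cref{main} depend --- remains unproved.
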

\color{black}
We also recall the following result which will be used heavily in this note.

\begin{lemma}\cite{BDLR}\label{bdlr}
 \label{crucial}
 If $N^{\prime}\cap M=\C$ then $\frac{1}{\lambda} p(P,Q)$ is a projection and $\frac{1}{\lambda}p(P,Q)\geq e_P\vee e_Q.$ Similar statement holds if we interchange $P$ and $Q$.
\end{lemma}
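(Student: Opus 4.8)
The plan is to establish the two assertions---that $f:=\tfrac1\lambda p(P,Q)$ is a projection and that $f\ge e_P\vee e_Q$---separately, writing $p:=p(P,Q)$ throughout. A useful preliminary reduction is to collapse the double sum defining $p$. Since $\{\mu_j\}$ is a right Pimsner--Popa basis for $Q/N$, the operator $\sum_j\mu_j e_1\mu_j^*$ equals $e_Q$: both sides act as the identity on $L^2(Q)$ (by the reconstruction identity $\sum_j\mu_j E_N(\mu_j^*x)=x$ for $x\in Q$) and both annihilate $L^2(Q)^\perp$ (since $E_Q(\mu_j^*\eta)=0$ there). Substituting this into \Cref{auxiliary} gives the single-sum form $p=\sum_i\lambda_i e_Q\lambda_i^*$, which I will use for the range computation below; in particular each summand equals $(\lambda_i e_Q)(\lambda_i e_Q)^*$, so $p\ge0$.

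To see that $f$ is a projection I would argue by traces rather than expand $p^2$ by hand. By \Cref{fact2}(4) we have $\|p\|=\lambda$, so $0\le p\le\lambda\,1$ and hence $\lambda p-p^2=p^{1/2}(\lambda-p)p^{1/2}\ge0$; since the trace on $M_1$ is faithful, it therefore suffices to prove $\tr(p^2)=\lambda\,\tr(p)$. Here I would use both commutant descriptions of $p$ from \Cref{fact2}(2). From $p=[P:N]E^{N'}_{P'}(e_Q)$ and trace-preservation, $\tr(p)=[P:N]\tr(e_Q)=[P:N][M:Q]^{-1}$, while, using that $E^{N'}_{P'}$ is trace-preserving and a $P'$-bimodule map (so $\tr(E^{N'}_{P'}(e_Q)^2)=\tr(e_Q\,E^{N'}_{P'}(e_Q))$), one gets $\tr(p^2)=[P:N]^2\tr\big(E^{N'}_{P'}(e_Q)^2\big)=[P:N]\,\tr(e_Q\,p)$. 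Feeding in the second description $p=[Q:N]E^{M_1}_{Q_1}(e_P)$ together with $e_Q\in Q_1$ and the $Q_1$-bimodularity of $E^{M_1}_{Q_1}$ yields $\tr(e_Q\,p)=[Q:N]\tr(e_Pe_Q)$. Hence $\tr(p^2)=[P:N][Q:N]\tr(e_Pe_Q)$, and this equals $\lambda\,\tr(p)=[M:N]\tr(e_Pe_Q)\cdot[P:N][M:Q]^{-1}$ because $[M:N]=[M:Q][Q:N]$. Thus $p^2=\lambda p$ and $f$ is a projection.

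For the domination I would identify the range of $f$ as $\mathcal{K}:=\overline{P\,L^2(Q)}$. The single-sum form shows at once that $p\eta\in\mathcal{K}$ for every $\eta$, so $\operatorname{ran}(f)\subseteq\mathcal{K}$; and if $\eta\perp\mathcal{K}$ then $\langle p\eta,\eta\rangle=\sum_i\|e_Q\lambda_i^*\eta\|^2=0$, because $\langle e_Q\lambda_i^*\eta,\zeta\rangle=\langle\eta,\lambda_i\zeta\rangle=0$ for all $\zeta\in L^2(Q)$, whence $p\eta=0$. Finally $f$ is injective on $\mathcal{K}$: if $\xi\in\mathcal{K}$ and $f\xi=0$, the same computation forces $e_Q\lambda_i^*\xi=0$ for every $i$, i.e. $\xi\perp\lambda_i L^2(Q)$ for all $i$, i.e. $\xi\perp\mathcal{K}$, so $\xi=0$. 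A projection whose range is contained in $\mathcal{K}$ and which is injective on $\mathcal{K}$ must be exactly the projection onto $\mathcal{K}$. Since $\mathcal{K}\supseteq L^2(P)$ and $\mathcal{K}\supseteq L^2(Q)$, we conclude $f\ge e_P\vee e_Q$. The statement with the roles of $P$ and $Q$ exchanged then follows by conjugating with $J$, using $Je_PJ=e_P$, $Je_QJ=e_Q$, and $Jp(P,Q)J=p(Q,P)$ from \Cref{fact2}(3).

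The main obstacle is the trace identity $\tr(p^2)=\lambda\,\tr(p)$, and it is precisely here that irreducibility enters, through the commutant formulas and the norm computation recorded in \Cref{fact2}. The decisive move is to route both $\tr(p^2)$ and $\tr(e_Q\,p)$ through the two commutant descriptions of $p$, so that bimodularity of the conditional expectations reduces everything to the single number $\tr(e_Pe_Q)$. A head-on expansion of $\sum_{i,k}\lambda_i e_Q\lambda_i^*\lambda_k e_Q\lambda_k^*$ instead produces basis-dependent quantities such as $\sum_i\lambda_i^*\lambda_i$, which are genuinely awkward to control without first invoking irreducibility, and I would avoid that route.
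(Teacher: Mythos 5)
The paper itself gives no proof of this lemma: it is imported wholesale from \cite{BDLR}. So your attempt has to be judged on its own terms. Two parts of it are correct and nicely done: the collapse $p:=p(P,Q)=\sum_i\lambda_ie_Q\lambda_i^*$ (hence $p\geq 0$), and the geometric endgame --- granting that $\tfrac{1}{\lambda}p$ is a projection, your kernel/range computation correctly identifies it as the orthogonal projection onto $\mathcal{K}=\overline{P\,L^2(Q)}\supseteq L^2(P),\,L^2(Q)$, and $J$-conjugation gives the statement with $P$ and $Q$ interchanged. (Minor slip: the vanishing of $\mu_je_1\mu_j^*$ on $L^2(Q)^\perp$ is about the projection onto $L^2(N)$, not about $E_Q$.)

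The genuine gap is in the projection step, at the very move you call decisive. The identity $\tr(p^2)=[P:N]\tr(e_Qp)$ is obtained from trace-preservation of $E^{N'}_{P'}$, so it is an identity for the trace of the $II_1$ factor $N'$ (the commutant of $N$ on $L^2(M)$); the identity $\tr(e_Qp)=[Q:N]\tr(e_Pe_Q)$ is obtained from trace-preservation of $E^{M_1}_{Q_1}$, so it is an identity for the trace of $M_1$. These are traces of two \emph{different} von Neumann algebras. All the elements involved ($p$, $p^2$, $e_Qp$, $e_Pe_Q$) lie in $N'\cap M_1$, and concatenating the two identities requires that $\tr_{N'}$ and $\tr_{M_1}$ restrict to the \emph{same} trace on $N'\cap M_1$. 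That is not a formality: the map $x\mapsto Jx^*J$ is an anti-isomorphism of $N'$ onto $M_1$ intertwining the two traces, it carries $N'\cap M_1$ onto itself but sends $p(P,Q)$ to $p(Q,P)$; the coincidence of the two traces on $N'\cap M_1$ is exactly an extremality/sphericality property of $N\subset M$. It is true when $N'\cap M=\C$ (irreducible $\Rightarrow$ extremal $\Rightarrow$ the two canonical traces on $N'\cap M_1$ agree), but that is a theorem which must be invoked --- and it is precisely where irreducibility would enter; as written, your argument never uses irreducibility here and establishes each equality only for a trace in which the other is not justified. Nor can you escape by staying inside $M_1$: there the only description available is $p(P,Q)=[Q:N]E^{M_1}_{Q_1}(e_P)$ (the $P_1$-expectation formula in \Cref{fact2} describes $p(Q,P)$, not $p(P,Q)$), and bimodularity then yields only the circular identity $\tr(p^2)=[Q:N]\tr(e_Pp)=[Q:N]^2\tr\big(E^{M_1}_{Q_1}(e_P)^2\big)$.

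A repair that keeps your architecture and makes the use of $N'\cap M=\C$ explicit: write $\hat{x}\in L^2(M)$ for the image of $x\in M$, and set $c:=\sum_i\lambda_iE_Q(\lambda_i^*)\in M$, so that $p\hat{a}=\widehat{\sum_i\lambda_iE_Q(\lambda_i^*a)}$ for $a\in P$. Since $p\in P'$ by \Cref{fact2}, $p\hat{a}=ap\hat{1}=\widehat{ac}$; since $p\in M_1$ and $M_1'=JNJ$, $p$ commutes with the right action of $N$, which gives $\sum_i\lambda_iE_Q(\lambda_i^*n)=cn$, while the previous relation gives $\sum_i\lambda_iE_Q(\lambda_i^*n)=nc$. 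Hence $c\in N'\cap M=\C$ is a scalar, and then $p\,\widehat{ay}=\widehat{cay}$ for all $a\in P$, $y\in Q$. So $p$ acts as the scalar $c$ on the dense subspace $\{\widehat{ay}\}$ of $\mathcal{K}$ and, by your own computation, as $0$ on $\mathcal{K}^\perp$; therefore $p=c\,P_{\mathcal{K}}$ with $c=\lVert p\rVert=\lambda$, and both assertions of the lemma follow at once, with no trace identities needed.
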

\begin{remark}\label{downward}
 The auxiliary operators corresponding to a downward basic construction (see \Cref{n}) $(N_{-1},Q_{-1},P_{-1},N)$ of the quadruple $(N,P,Q,M)$ satisfy the following formulae:

$$p(Q_{-1},P_{-1})=[Q_{-1}:N_{-1}]E^{N^{\prime}_{-1}}_{Q^{\prime}_{-1}}(e^N_{P_{-1}})=[P_{-1}:N_{-1}]E^M_P(e^N_{Q_{-1}})$$ and 
 $$p(P_{-1},Q_{-1})=[P_{-1}:N_{-1}]E^{N^{\prime}_{-1}}_{P^{\prime}_{-1}}(e^N_{Q_{-1}})=[Q_{-1}:N_{-1}]E^M_Q(e^N_{P_{-1}}).$$\end{remark}

\section{Pimsner-Popa probabilistic constant}
Generalizing the Jones index, Pimsner and Popa in \cite[Notation 2.5]{PP} had introduced the probabilistic constant $\lambda(B_1,B_2)$ for von Neumann subalgebras $B_2\subset B_1\subset M$ of a finite von Neumann algebra $M$
and this proved to be a powerful analytical tool in subfactor theory. However, as observed in {\cite[Definition 4.1]{O}}, this definition works for general subalgebras $B_1$ and $B_2$ (not necessarily, $B_2\subset B_1)$ as well.
In this section we shall obtain a formula for $\lambda(P,Q)$, for  a pair of intermediate subfactors $P$ and $Q$ of a subfactor $N\subset M$.
\begin{definition}(Pimsner-Popa)\label{pimsner-popa}
 Consider a pair of von Neumann subalgebras $P$ and $Q$ of a finite von Neumann algebra $M$. The Pimsner-Popa probabilistic constant of the ordered pair $(P,Q)$  is defined as follows:
 $$\lambda(P,Q)=\text{max}\{t>0| E_Q(x)\geq t x ~~\forall x\in P_{+}\}.$$
\end{definition}
\begin{remark}\label{remarkone}
 If $N\subset M$ is a  subfactor of type $II_1$ factors then $\lambda(N,M)=1.$ This follows easily from the above definition.\end{remark}
In \cite[Theorem 2.2]{PP}, Pimsner and Popa have given a very useful characterization of Jones index. More precisely, they obtained the following relationship between $[M:N]$ and $\lambda(M,N)$ for any subfactor $N\subset M$ of type $II_1$ factors:
\begin{equation}\label{pimsner}
{[M:N]}^{-1}=\lambda(M,N).\end{equation}
 Below we consider a quadruple $(N,P,Q,M)$ and obtain a calculable formula for $\lambda(P,Q)$ (and also of $\lambda(Q,P)$) in the case $N\subset M$ is irreducible. If $Q=M$ and $P=N$, we recover \Cref{pimsner}.
We hope to obtain formulae for $\lambda(P,Q)$ and $\lambda(Q,P)$ in the general case as well in a future publication.
\begin{theorem}\label{imp1}
 Suppose $(N,P,Q,M)$ is a quadruple of $II_1$ factors with $N^{\prime}\cap M=\C$, then $\lambda(Q,P)= \frac{{\tr}(e_Pe_Q)}{{\tr}(e_Q)}$ and
 $\lambda(P,Q)= \frac{{\tr}(e_Pe_Q)}{{\tr}(e_P)}.$
 \end{theorem}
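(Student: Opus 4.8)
The plan is to prove the identity $\lambda(Q,P)=\tr(e_Pe_Q)/\tr(e_Q)$ and to obtain the companion formula for $\lambda(P,Q)$ by interchanging the roles of $P$ and $Q$. Since $\tr(e_Q)=[M:Q]^{-1}$, and since $\lambda:=\lVert p(P,Q)\rVert=[M:N]\tr(e_Pe_Q)$ by \Cref{fact2}, the target is equivalent to $\lambda(Q,P)=[M:Q]\tr(e_Pe_Q)=\lambda/[Q:N]=:c$. I would prove $\lambda(Q,P)=c$ by establishing $\lambda(Q,P)\le c$ and $\lambda(Q,P)\ge c$ separately: the first by exhibiting an extremal element, the second by an operator inequality valid on all of $Q_+$.

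For the upper bound I would test the defining inequality on the single positive element $x_0:=e^N_{Q_{-1}}\in Q_+$ coming from a downward basic construction $(N_{-1},Q_{-1},P_{-1},N)$ of the quadruple. By \Cref{downward}, $E_P(x_0)=\tfrac{1}{[M:P]}\,p(Q_{-1},P_{-1})$ (using $[P_{-1}:N_{-1}]=[M:P]$ from \Cref{fact}). Applying \Cref{crucial} to the downward quadruple, $\tfrac{1}{\lambda'}p(Q_{-1},P_{-1})$ is a projection $g$ with $g\ge x_0$, where $\lambda'=\lVert p(Q_{-1},P_{-1})\rVert$; hence $E_P(x_0)=\tfrac{\lambda'}{[M:P]}\,g$, and since $g$ is a projection dominating the projection $x_0$ one reads off that the largest $t$ with $E_P(x_0)\ge t\,x_0$ is exactly $\lambda'/[M:P]$. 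A short index bookkeeping using \Cref{fact}(2)--(3) and \Cref{fact2} — namely $\lambda'=[M:N]\tr(e^N_{P_{-1}}e^N_{Q_{-1}})=[M:N]\tfrac{[M:Q]}{[P:N]}\tr(e_Pe_Q)$ and $[M:N]=[M:P][P:N]$ — gives $\lambda'/[M:P]=[M:Q]\tr(e_Pe_Q)=c$. This forces $\lambda(Q,P)\le c$.

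For the lower bound I must show the operator inequality $E_P(x)\ge c\,x$ for every $x\in Q_+$. Here I would start from \Cref{crucial}: $f:=\tfrac1\lambda p(P,Q)$ is a projection with $f\ge e_P\vee e_Q$, which together with $p(P,Q)=[Q:N]E^{M_1}_{Q_1}(e_P)$ yields the clean $M_1$-level relation $E^{M_1}_{Q_1}(e_P)=c\,f\ge c\,e_Q$. The remaining task is to descend this to the asserted inequality inside $M$, using the Jones basic construction $Q\subset M\subset Q_1$, the bimodule property of $E^{M_1}_{Q_1}$ over $Q_1\supset M$, and the pull-down relation $e_P\,y\,e_P=E_P(y)e_P$.

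I expect this descent to be the main obstacle. The naive move — compressing $E^{M_1}_{Q_1}(e_P)\ge c\,e_Q$ by $x^{1/2}\in Q$ and then applying a trace-preserving conditional expectation onto $M$ — collapses to the scalar inequality $\tr(e_P)\ge c\,\tr(e_Q)$ and loses all operator information, precisely because $E^{M_1}_M(e_P)$ and $E^{M_1}_M(e_Q)$ are scalars. One is therefore forced to use that $f$ is genuinely a projection (not merely $\ge e_Q$) and to carry out an honest operator pull-down reproducing the sandwich $e_P\,x\,e_P$ rather than $x^{1/2}e_Px^{1/2}$; equivalently, one may prove the reformulation $E_QE_P(b)\ge c\,E_Q(b)$ for all $b\in M_+$, which is exactly what $E_P(x)\ge c\,x$ on $Q_+$ amounts to after pairing against the faithful trace on $Q$. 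Once $E_P(x)\ge c\,x$ is secured, together with the upper bound we conclude $\lambda(Q,P)=c$. Finally, the formula for $\lambda(P,Q)$ follows by the symmetric argument with $P$ and $Q$ interchanged, the two auxiliary operators being related by $Jp(P,Q)J=p(Q,P)$ as recorded in \Cref{fact2}, and this recovers \Cref{pimsner} when $Q=M$ and $P=N$.
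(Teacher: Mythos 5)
Your upper bound is correct and is, up to replacing a norm computation by a compression, exactly the paper's own argument: both test the defining inequality on the single Jones projection $e^N_{Q_{-1}}$, identify $E^M_P(e^N_{Q_{-1}})=\frac{1}{[M:P]}\,p(Q_{-1},P_{-1})$ via \Cref{downward}, and evaluate the constant using \Cref{crucial} together with the index bookkeeping of \Cref{fact} and \Cref{fact2}. The problem is the lower bound, which is the substantive half of the theorem and which your proposal does not prove. You say yourself that the descent from the $M_1$-level relation $E^{M_1}_{Q_1}(e_P)=c\,f\ge c\,e_Q$ to the inequality $E^M_P(x)\ge c\,x$ for all $x\in Q_+$ is ``the main obstacle,'' and what follows is only a description of what a proof would have to accomplish (an ``honest operator pull-down''), plus a trace-duality reformulation ($E_QE_P(b)\ge c\,E_Q(b)$ for all $b\in M_+$). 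That reformulation is indeed equivalent to the desired statement (via $\tr\big(b\,E_P(x)\big)=\tr\big(E_QE_P(b)\,x\big)$ and $\tr(bx)=\tr\big(E_Q(b)x\big)$ for $x\in Q$, $b\in M$, noting $E_QE_P(b)-cE_Q(b)$ lies in $Q$), but it is left unproven; so the argument is incomplete exactly where \Cref{crucial} has to be converted into an inequality valid for \emph{every} element of $Q_+$ rather than for a single projection.

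The idea you are missing is that one should not ascend to $M_1$ at all: the propagation to general $x\in Q_+$ happens through the downward basic construction, and you already hold the needed single-element inequality. Indeed, your own upper-bound computation gives $E^M_P(e^N_{Q_{-1}})=c\,g$ with $g$ a projection dominating $e^N_{Q_{-1}}$, hence $E^M_P(e^N_{Q_{-1}})\ge c\,e^N_{Q_{-1}}$. Now use that $Q=\langle N,e^N_{Q_{-1}}\rangle$ is the basic construction of $Q_{-1}\subset N$: every $x\in Q_+$ can be written as $x=y^*y$ with $y=\sum_i a_ie^N_{Q_{-1}}b_i$ and $a_i,b_i\in N$, so that $x=\sum_{i,j}b_j^*\,E^N_{Q_{-1}}(a_j^*a_i)\,e^N_{Q_{-1}}\,b_i$. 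Since $N\subset P$, the expectation $E^M_P$ is $N$-bilinear; since the matrix $\big[E^N_{Q_{-1}}(a_j^*a_i)\big]$ is positive with entries in $Q_{-1}$, and both $e^N_{Q_{-1}}$ and $E^M_P(e^N_{Q_{-1}})$ commute with $Q_{-1}$, the single-element inequality can be substituted inside the sum, yielding
\[
x\;\le\; \frac{1}{c}\sum_{i,j}b_j^*\,E^N_{Q_{-1}}(a_j^*a_i)\,E^M_P(e^N_{Q_{-1}})\,b_i\;=\;\frac{1}{c}\,E^M_P(x).
\]
This is precisely the proof given in the paper (written there for $\lambda(P,Q)$, i.e.\ with the roles of $P$ and $Q$ interchanged), and with it your two bounds combine to give $\lambda(Q,P)=c=\tr(e_Pe_Q)/\tr(e_Q)$.
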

\begin{proof}
 Let $x\in P_{+}$.  Since $x$ is positive, we must have
 $$x=\big(\sum_j a_je^N_{P_{-1}}b_j\big)^*\big(\sum_i a_ie^N_{P_{-1}}b_i\big)=\sum_{i,j}b^*_jE^N_{P_{-1}}(a^*_ja_i)e^N_{P_{-1}}b_i,$$ where $a_i,b_j\in N$.
 By \Cref{crucial} and \Cref{fact2} [item (4)], we have
 $$\frac{1}{[N:N_{-1}]\tr(e^N_{P_{-1}}e^N_{Q_{-1}})}p(P_{-1},Q_{-1})\geq e^N_{P_{-1}}$$ and thanks to \Cref{fact} we have $$\frac{1}{[M:P][M:Q]\tr(e_Pe_Q)}[Q_{-1}:N_{-1}]E^{M}_{Q}(e^N_{P_{-1}})\geq e^N_{P_{-1}}.$$
 Again by \Cref{fact}, $[Q_{-1}:N_{-1}]=[M:Q]$  so that
 \begin{equation}\label{inequality}
\frac{\tr(e_P)}{\tr(e_Pe_Q)}E^M_Q(e^N_{P_{-1}})\geq e^N_{P_{-1}}.
\end{equation}
Thus, we deduce that : \begin{align*}
      x  = & \sum_{i,j} b^*_jE^N_{P_{-1}}(a^*_ja_i)e^N_{P_{-1}}b_i\\
      \leq & \frac{\tr(e_P)}{\tr(e_Pe_Q)}\sum_{i,j} b^*_jE^N_{P_{-1}}(a^*_ja_i)E^M_Q(e^N_{P_{-1}})b_i   ~~~~~~\text{[By \Cref{inequality}]}\\
      \leq & \frac{\tr(e_P)}{\tr(e_Pe_Q)}E^M_Q\bigg(\sum_{i,j} b^*_jE^N_{P_{-1}}(a^*_ja_i)e^N_{P_{-1}}b_i\bigg)\\
      \leq & \frac{\tr(e_P)}{\tr(e_Pe_Q)}E^M_Q(x).
     \end{align*}
In other words, for any $x\in P_{+}$, we have
\begin{equation}\label{inequality2} E_Q(x)\geq \frac{\tr(e_Pe_Q)}{\tr(e_P)}x.
\end{equation}

Now, suppose $E^M_Q(x)\geq sx$ for some $s>0$. Then, $E^M_Q(e^N_{P_{-1}})\geq s e^N_{P_{-1}}$. Taking norm on both sides we get, $\lVert E^M_Q(e^N_{P_{-1}})\rVert\geq s$.
    By \Cref{fact2} and \Cref{downward} we see that $$\lVert E^M_Q(e^N_{P_{-1}})\rVert =\frac{[M:N]\tr(e^N_{P_{-1}}e^N_{Q_{-1}})}{[M:Q]}.$$ Hence, by \Cref{fact},
\begin{equation}\label{inequality3}
\frac{\tr(e_Pe_Q)}{\tr(e_P)}\geq s.\end{equation} Therefore, combining \Cref{inequality2} and \Cref{inequality3} we get $ \lambda(P,Q)=\frac{\tr(e_Pe_Q)}{\tr(e_P)},$ as desired.

The other formula follows by interchanging $P$ and $Q$.

 This completes the proof. \end{proof}
\begin{remark}
 Suppose $(N,P,Q,M)$ be as in \Cref{imp1}. In general, $\lambda(P,Q) \neq \lambda(Q,P)$ as can be easily seen from \Cref{remarkone}. Using \Cref{imp1}, it follows that $\lambda(P,Q)=\lambda(Q,P)$ if and only if $[P:N]=[Q:N].$
\end{remark}

\begin{example}\label{ex-1}
Let $G$ be a finite group acting outerly on a $II_1$-factor $S$. Let
$H, K $ and $L$ be subgroups of $G$ such that $L \subseteq H \cap K$
and $H$ and $K$ are non-trivial.   Consider the quadruple $(N = S
\rtimes L, P = S \rtimes H, Q = S \rtimes K, M= S \rtimes G )$. Then, a simple calculation shows that (see \cite[Section 2.2]{BG}, for instance) \begin{equation*}
  \tr(e_P e_Q)  = \frac{ |H \cap K|}{|G|}, \tr(e_P)=\frac{|H|}{|G|} ~~~\text{and}~~~\tr(e_P)=\frac{|K|}{|G|}.
  \end{equation*}
  Therefore, $$\lambda(P,Q)=\frac{|H\cap K|}{|H|}~~~\text{and}~~~\lambda(Q,P)=\frac{|H\cap K|}{|K|}.$$
\end{example}
\begin{example}
 Let $H, K, G$ and $S$ be as in \Cref{ex-1}.
Consider the quadruple $(N= S^G, P = S^H, Q = S^K, M = S)$. Then,
$$\lambda(P,Q)=\frac{|H\cap K|}{|K|}~~~~\text{and}~~~\lambda(Q,P)=\frac{|H\cap K|}{|H|}.$$ The proof is simple and omitted.
\end{example}

\begin{corollary}\label{dual}
 Let $(N,P,Q,M)$ be a quadruple of $II_1$ factors with $N^{\prime}\cap M=\C$ and $[M:N]<\infty$. Then, $\lambda(P_1,Q_1)=\lambda(Q,P)$ and $\lambda(Q_1,P_1)=\lambda(P,Q).$
\end{corollary}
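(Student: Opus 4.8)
The plan is to apply \Cref{imp1} to the basic-construction (dual) quadruple $(M,Q_1,P_1,M_1)$ and then translate the resulting trace expressions back to the original data. First I would check that the dual quadruple is again irreducible, so that \Cref{imp1} is applicable to it: since $N'\cap M=\C$ and $M\subset M_1$ is the basic construction of $N\subset M$, the modular conjugation $J$ on $L^2(M)$ yields $M'\cap M_1=J(N'\cap M)J=\C$. I would then match the four factors of $(M,Q_1,P_1,M_1)$ with the slots $(N,P,Q,M)$ of \Cref{imp1}: comparing the two defining matrices, the bottom-right intermediate subfactor (the ``$P$''-slot) is $Q_1$, with Jones projection $e_{Q_1}$, and the top-left intermediate subfactor (the ``$Q$''-slot) is $P_1$, with Jones projection $e_{P_1}$. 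With this identification \Cref{imp1} gives directly
$$\lambda(P_1,Q_1)=\frac{\tr(e_{P_1}e_{Q_1})}{\tr(e_{P_1})}\qquad\text{and}\qquad \lambda(Q_1,P_1)=\frac{\tr(e_{P_1}e_{Q_1})}{\tr(e_{Q_1})}.$$

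Next I would compute the three traces appearing here in terms of the original quadruple. For the numerator I would quote \Cref{fact}, namely $\tr(e_{P_1}e_{Q_1})=\frac{[M:P]}{[Q:N]}\tr(e_Pe_Q)=\frac{[M:Q]}{[P:N]}\tr(e_Pe_Q)$. For the denominators I would use $\tr(e_{P_1})=[M_1:P_1]^{-1}$ together with the multiplicativity of the Jones index along the tower $M\subset P_1\subset M_1$: since $[M_1:M]=[M:N]$ and $[P_1:M]=[M:P]$, one gets $[M_1:P_1]=[M:N]/[M:P]$ and hence $\tr(e_{P_1})=[M:P]/[M:N]$, and symmetrically $\tr(e_{Q_1})=[M:Q]/[M:N]$.

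Substituting these into the two displayed formulas and simplifying with the index tower law $[M:N]=[M:Q][Q:N]=[M:P][P:N]$ together with $\tr(e_P)=[M:P]^{-1}$ and $\tr(e_Q)=[M:Q]^{-1}$, I expect
$$\lambda(P_1,Q_1)=\frac{[M:N]}{[Q:N]}\,\tr(e_Pe_Q)=[M:Q]\,\tr(e_Pe_Q)=\frac{\tr(e_Pe_Q)}{\tr(e_Q)}=\lambda(Q,P),$$
where the final equality is \Cref{imp1} applied to the original quadruple, and symmetrically
$$\lambda(Q_1,P_1)=\frac{[M:N]}{[P:N]}\,\tr(e_Pe_Q)=[M:P]\,\tr(e_Pe_Q)=\frac{\tr(e_Pe_Q)}{\tr(e_P)}=\lambda(P,Q).$$
This would complete the argument.

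The only real subtlety — and the step I would double-check most carefully — is the correct matching of the $P$- and $Q$-slots under passage to the dual quadruple. The order is reversed there, and it is precisely this reversal that produces the cross-identities $\lambda(P_1,Q_1)=\lambda(Q,P)$ (rather than $\lambda(P,Q)$). The accompanying index computation $\tr(e_{P_1})=[M:P]/[M:N]$ for the dual Jones projections is the other place where a slip would be easy; everything else is routine substitution into \Cref{imp1} and \Cref{fact}.
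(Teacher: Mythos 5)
Your proof is correct and follows essentially the same route as the paper's: apply \Cref{imp1} to the dual quadruple $(M,Q_1,P_1,M_1)$, reduce the resulting traces via \Cref{fact} and the identities $\tr(e_{P_1})={[P:N]}^{-1}$, $\tr(e_{Q_1})={[Q:N]}^{-1}$ (equivalently, your $[M:P]/[M:N]$ and $[M:Q]/[M:N]$), and then compare with \Cref{imp1} applied to the original quadruple. The only addition is that you verify explicitly the irreducibility $M^{\prime}\cap M_1=J(N^{\prime}\cap M)J=\C$ of the dual quadruple, a hypothesis the paper uses implicitly; this is a sound check rather than a different method.
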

\begin{proof}
 It is easy to check that $\tr(e_{Q_1})=\frac{1}{[Q:N]}.$ Thus, using \Cref{fact} and \Cref{imp1}, we see that
 $$\lambda(Q_1,P_1)=\frac{\tr(e_{P_1}e_{Q_1})}{\tr(e_{Q_1})}=[M:P] \tr(e_Pe_Q)=\lambda(P,Q).$$
 Similarly, $\lambda(P_1,Q_1)=\lambda(Q,P).$ This completes the proof.
\end{proof}
\begin{corollary}\label{downwarddual}
Let $(N,P,Q,M)$ be a quadruple of $II_1$ factors with $N^{\prime}\cap M=\C$ and $[M:N]<\infty$. Then, $\lambda(P_{-1},Q_{-1})=\lambda(Q,P)$ and $\lambda(Q_{-1},P_{-1})=\lambda(P,Q).$
 \end{corollary}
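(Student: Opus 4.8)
The plan is to run exactly the argument of \Cref{dual}, but with the downward basic construction quadruple $(N_{-1},Q_{-1},P_{-1},N)$ of \Cref{n} in place of the upward dual $(M,Q_1,P_1,M_1)$. Before invoking \Cref{imp1} for this quadruple I must first check that it is irreducible. Since $N_{-1}\subset N\subset M$ is a basic construction, the relative commutant $N^{\prime}\cap M$ is anti-isomorphic to $N_{-1}^{\prime}\cap N$; as $N^{\prime}\cap M=\C$ by hypothesis, this forces $N_{-1}^{\prime}\cap N=\C$, so the downward quadruple is genuinely irreducible and \Cref{imp1} applies to it.

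Next I would match the slots. Comparing the defining matrix of $(N_{-1},Q_{-1},P_{-1},N)$ in \Cref{n} with the general convention there, the factor $Q_{-1}$ occupies the lower-right (``$P$'') position and $P_{-1}$ the upper-left (``$Q$'') position, with associated Jones projections $e^N_{Q_{-1}}$ and $e^N_{P_{-1}}$ lying in $M$ (the basic construction of $N_{-1}\subset N$). Feeding these roles into \Cref{imp1} then gives
\[
\lambda(P_{-1},Q_{-1})=\frac{\tr(e^N_{P_{-1}}e^N_{Q_{-1}})}{\tr(e^N_{P_{-1}})},
\qquad
\lambda(Q_{-1},P_{-1})=\frac{\tr(e^N_{P_{-1}}e^N_{Q_{-1}})}{\tr(e^N_{Q_{-1}})}.
\]

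Finally I would substitute the data already recorded in \Cref{fact}. Item (2) of \Cref{fact} supplies $\tr(e^N_{P_{-1}}e^N_{Q_{-1}})=\frac{[M:Q]}{[P:N]}\tr(e_Pe_Q)=\frac{[M:P]}{[Q:N]}\tr(e_Pe_Q)$, while $\tr(e^N_{P_{-1}})=[N:P_{-1}]^{-1}=[P:N]^{-1}$ and $\tr(e^N_{Q_{-1}})=[Q:N]^{-1}$. Plugging the first form of the product-trace into the first fraction and the second form into the second fraction collapses them to $[M:Q]\tr(e_Pe_Q)$ and $[M:P]\tr(e_Pe_Q)$ respectively; since \Cref{imp1} also gives $\lambda(Q,P)=\tr(e_Pe_Q)/\tr(e_Q)=[M:Q]\tr(e_Pe_Q)$ and $\lambda(P,Q)=[M:P]\tr(e_Pe_Q)$, the two claimed identities follow. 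I expect the only real subtlety to be bookkeeping: confirming irreducibility of the downward quadruple and, above all, keeping the built-in $P\leftrightarrow Q$ transposition straight, since it is precisely this transposition that produces the crossed identifications $\lambda(P_{-1},Q_{-1})=\lambda(Q,P)$ and $\lambda(Q_{-1},P_{-1})=\lambda(P,Q)$ rather than the untransposed ones. The index arithmetic itself is entirely routine.
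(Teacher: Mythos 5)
Your proposal is correct and follows essentially the same route as the paper: apply \Cref{imp1} to the downward quadruple $(N_{-1},Q_{-1},P_{-1},N)$ and then substitute the trace identities of \Cref{fact} together with $\tr(e^N_{P_{-1}})=[P:N]^{-1}$, $\tr(e^N_{Q_{-1}})=[Q:N]^{-1}$. Your explicit verification that the downward quadruple is irreducible (via $N'\cap M = J(N_{-1}'\cap N)J$) is a correct detail that the paper leaves implicit, but it does not change the argument.
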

 \begin{proof}
  First, it is easy to check that $\tr(e^N_{P_{-1}})={[P:N]}^{-1}.$ Thus by \Cref{imp1} and \Cref{fact}, 
  \begin{equation*}\lambda(P_{-1},Q_{-1})=\frac{\tr(e^N_{P_{-1}}e^N_{Q_{-1}})}{\tr(e^N_{P_{-1}})}=[M:Q]\tr(e_Pe_Q)=\lambda(Q,P).
  \end{equation*}
  Similarly, $\lambda(Q_{-1},P_{-1})=\lambda(P,Q).$ This completes the proof.
 \end{proof}

\begin{corollary}\label{whenequalityholds}
 Let $(N,P,Q,M)$ be a quadruple of type $II_1$ factors with $N^{\prime}\cap M=\C$ and $[M:N]<\infty$. Then, $1\geq \lambda(P,Q)\geq {[P:N]}^{-1}.$ Furthermore, 
 $\lambda(P,Q)= {[P:N]}^{-1}$ if and only if 
 $(N,P,Q,M)$ is a commuting square. Also, $\lambda(P,Q)=1$ if and only if $P\subset Q$. A similar statement holds if we interchange $P$ and $Q$.
 \end{corollary}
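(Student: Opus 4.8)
The plan is to reduce every assertion to the closed formula $\lambda(P,Q)=\tr(e_Pe_Q)/\tr(e_P)$ supplied by \Cref{imp1}, and then to read off each inequality and each equality case from the mutual position of the three Jones projections $e_P$, $e_Q$ and $e_1=e^M_N$ inside $M_1$, all of which carry the same faithful normal trace $\tr$. The one structural input driving everything is that $N\subseteq P\cap Q$, so $L^2(N)\subseteq L^2(P)\cap L^2(Q)$ and hence $e_1\leq e_P$ and $e_1\leq e_Q$. I will also use repeatedly that $\tr(e_P)={[M:P]}^{-1}$, $\tr(e_1)={[M:N]}^{-1}$, and $[M:N]=[M:P][P:N]$.

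For the two inequalities the argument is short. The upper bound $\lambda(P,Q)\leq 1$ is immediate from $e_Pe_Qe_P\leq e_P$ (since $e_Q\leq 1$), which after taking the trace gives $\tr(e_Pe_Q)=\tr(e_Pe_Qe_P)\leq\tr(e_P)$. For the lower bound it suffices to prove $\tr(e_Pe_Q)\geq{[M:N]}^{-1}$, since substituting the index identities above turns this precisely into $\lambda(P,Q)\geq{[P:N]}^{-1}$. I would obtain it from the chain $e_Pe_Qe_P\geq e_P\wedge e_Q\geq e_1$: writing $f=e_P\wedge e_Q\leq e_P$ we have $e_Pfe_P=f$, and $e_Q\geq f$ gives $e_Pe_Qe_P\geq e_Pfe_P=f$; combined with $f\geq e_1$ and $\tr(e_1)={[M:N]}^{-1}$, taking traces yields $\tr(e_Pe_Q)=\tr(e_Pe_Qe_P)\geq\tr(e_1)={[M:N]}^{-1}$.

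The equality cases are then a matter of faithfulness of $\tr$. The case $\lambda(P,Q)=1$ reads $\tr\big((1-e_Q)e_P(1-e_Q)\big)=\tr(e_P)-\tr(e_Pe_Q)=0$, so faithfulness forces $e_P(1-e_Q)=0$, i.e. $e_P\leq e_Q$, equivalently $L^2(P)\subseteq L^2(Q)$, equivalently $P\subseteq Q$. The case $\lambda(P,Q)={[P:N]}^{-1}$ reads $\tr(e_Pe_Q)=\tr(e_1)$, and feeding this into the sandwich $\tr(e_Pe_Qe_P)\geq\tr(e_P\wedge e_Q)\geq\tr(e_1)$ forces both inequalities to be equalities, whence $e_P\wedge e_Q=e_1$ and $e_Pe_Qe_P=e_1$.

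The main obstacle, and the only step that is not a one-line trace manipulation, is to promote the trace equality $\tr(e_Pe_Q)=\tr(e_1)$ to the commuting-square condition $E^M_PE^M_Q=E^M_QE^M_P=E^M_N$. I would do this by showing $e_Pe_Q=e_1$ directly: using $e_1\leq e_P,e_Q$ (so $e_Pe_1=e_Qe_1=e_1$) and $\tr(e_Qe_Pe_Q)=\tr(e_Pe_Q)=\tr(e_1)$, the computation of $\lVert e_Pe_Q-e_1\rVert_2^2=\tr\big((e_Qe_P-e_1)(e_Pe_Q-e_1)\big)$ collapses to $\tr(e_Qe_Pe_Q)-\tr(e_1)=0$, so $e_Pe_Q=e_1$ and, taking adjoints, $e_Qe_P=e_1$. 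Evaluating these two identities on $M\subseteq L^2(M)$ via $e_P\hat{x}=\widehat{E^M_P(x)}$ translates them verbatim into $E^M_PE^M_Q=E^M_QE^M_P=E^M_N$, which is exactly the commuting square condition; conversely $e_Pe_Q=e_1\Rightarrow e_Pe_Qe_P=e_1e_P=e_1$ is trivial since $e_1\leq e_P$, closing the equivalence. Finally, every ``interchange $P$ and $Q$'' assertion follows identically from the companion formula $\lambda(Q,P)=\tr(e_Pe_Q)/\tr(e_Q)$, the commuting-square condition being symmetric in $P$ and $Q$.
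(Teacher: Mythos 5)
Your proposal is correct, and for the heart of the corollary --- the equivalence between $\lambda(P,Q)={[P:N]}^{-1}$ and the commuting square condition --- it takes a genuinely different route from the paper. The paper reduces this equivalence to the auxiliary operator machinery of \cite{BDLR}: writing $\lambda=\lVert p(P,Q)\rVert=[M:N]\tr(e_Pe_Q)$, it observes $\lambda(P,Q)=\lambda/[P:N]$ and then cites two external results (Proposition 2.4 of \cite{BDLR} and Proposition 4.5 of \cite{BG}) to the effect that $(N,P,Q,M)$ is a commuting square iff the interior angle is $\pi/2$ iff $\lambda=1$. You bypass the angle entirely: from the trace equality $\tr(e_Pe_Q)=\tr(e_1)$ you compute, using only $e_1\leq e_P$ and $e_1\leq e_Q$, that $\lVert e_Pe_Q-e_1\rVert_2^2=\tr(e_Qe_Pe_Q)-\tr(e_1)=0$, hence $e_Pe_Q=e_Qe_P=e_1$, which evaluated on the dense subspace $\widehat{M}\subset L^2(M)$ is verbatim $E^M_PE^M_Q=E^M_QE^M_P=E^M_N$; the converse direction is immediate. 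This buys self-containment (no appeal to the angle papers) and in fact isolates a cleaner statement: given $e_1\leq e_P,e_Q$, the commuting square condition is equivalent to $\tr(e_Pe_Q)=\tr(e_1)$, an equivalence that needs no irreducibility --- irreducibility enters your argument only through the formula of \Cref{imp1}. What the paper's route buys is coherence with its own toolkit: the identity ``commuting square iff $\lambda=1$'' is reused later (e.g.\ in the proof of \Cref{prop}), so deriving it via $p(P,Q)$ keeps a single invariant in play. Your remaining steps match the paper's up to cosmetic differences: the paper gets the upper bound $\lambda(P,Q)\leq 1$ from monotonicity in the definition ($\lambda(P,Q)\leq\lambda(N,Q)=1$) rather than from the trace formula, gets the lower bound from $e_Pe_Qe_P\geq e_Pe_1e_P=e_1$ without your detour through $e_P\wedge e_Q$, and handles $\lambda(P,Q)=1\Leftrightarrow P\subset Q$ by the same faithfulness argument you give.
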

\begin{proof}  From \Cref{pimsner-popa} it follows immediately that $\lambda(P,Q)\leq \lambda(N,Q)$ and also that $\lambda(N,Q)=1$. Thus, $1\geq \lambda(P,Q).$ To show the inequality  $\lambda(P,Q)\geq {[P:N]}^{-1}$, first observe that $\tr(e_Pe_Q)\geq \frac{1}{[M:N]}$. Indeed, $e_Q\geq e_N$ and hence $e_Pe_Qe_P\geq e_N$ and now taking trace on both sides of the last inequality we get $\tr(e_Pe_Q)\geq \tr(e_N)=\frac{1}{[M:N]}$, as desired. Hence, by \Cref{imp1} , we obtain 
$$\lambda(P,Q)=\frac{\tr(e_Pe_Q)}{\tr (e_P)}\geq \frac{1}{[M:N]} [M:P]=\frac{1}{[P:N]}.$$ We conclude that if $(N,P,Q,M)$ is a quadruple of type $II_1$ factors, then $1\geq \lambda(P,Q)\geq {[P:N]}^{-1}.$
 
 Consider the auxiliary operator $p(P,Q)$ as in \Cref{auxiliary} and following \Cref{fact2} , put $\lVert p(P,Q)\rVert=\lambda.$ 
Since, by \Cref{fact2} [item (4)], $\lambda=[M:N] \tr(e_Pe_Q))$, by \Cref{imp1}, we obtain
\begin{equation}\label{lambda}
\lambda(P,Q)=\frac{\lambda}{[P:N]}.
\end{equation}

Recall, given a quadruple of $II_1$ factors $(N,P,Q,M)$, a notion of (interior) angle between $P$ and $Q$, denoted by $\alpha^N_M(P,Q)$, was introduced in \cite{BDLR}. By \cite[Proposition 2.4]{BDLR}, $(N,P,Q,M)$ is a commuting square if and only if ${\alpha}^N_M(P,Q)=\pi/2$. Furthermore, by \cite[Proposition 4.5]{BG}, ${\alpha}^N_M(P,Q)=\pi/2$ if and only if $\lambda=1$. In other words, $(N,P,Q,M)$ form a commuting square  if and only if $\lambda=1$ if and only if $\lambda(P,Q)={[P:N]}^{-1}~~~\text{(by \Cref{lambda})}.$
 
 If $P\subset Q$ then $e_P\leq e_Q$ and hence using \Cref{imp1} we see that $\lambda(P,Q)=1.$ Conversely, if $\lambda(P,Q)=1$ then we get $e_P=e_Pe_Q=e_Qe_P$ and hence $P=P\cap Q.$ Therefore,
 we get $P\subset Q$.
 
 This completes the proof. 
\end{proof}
\begin{corollary}
 \label{whenequalityholds2}
 Let $(N,P,Q,M)$ be a quadruple of type $II_1$ factors with $N^{\prime}\cap M=\C$ and $[M:N]<\infty$. Then, $1\geq \lambda(P,Q)\geq {[M:Q]}^{-1}.$ Furthermore, $(N,P,Q,M)$ is a co-commuting square if and only if $\lambda(P,Q)={[M:Q]}^{-1}.$
 A similar statement holds if we interchange $P$ and $Q$.
\end{corollary}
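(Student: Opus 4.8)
The plan is to deduce \Cref{whenequalityholds2} from \Cref{whenequalityholds} by passing to the dual quadruple $(M,Q_1,P_1,M_1)$, which by \Cref{n} is the basic construction of $(N,P,Q,M)$. The first step is to check that this dual quadruple is again irreducible: since $N^{\prime}\cap M=\C$ and $N\subset M\subset M_1$ is a Jones tower, the modular conjugation $J$ implements an (anti-)iso\-morph\-ism $M^{\prime}\cap M_1\cong N^{\prime}\cap M$, whence $M^{\prime}\cap M_1=\C$. Consequently \Cref{whenequalityholds} applies verbatim to $(M,Q_1,P_1,M_1)$, with the bottom factor $N$ now played by $M$, the intermediate $P$ by $Q_1$, and the intermediate $Q$ by $P_1$.

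Applying \Cref{whenequalityholds} to the dual quadruple yields $1\geq \lambda(Q_1,P_1)\geq [Q_1:M]^{-1}$, with equality on the right precisely when $(M,Q_1,P_1,M_1)$ is a commuting square. Now I translate each term back to the original quadruple. Since $Q\subset M\subset Q_1$ is a basic construction, $[Q_1:M]=[M:Q]$; by \Cref{dual}, $\lambda(Q_1,P_1)=\lambda(P,Q)$; and by definition the dual quadruple is a commuting square exactly when $(N,P,Q,M)$ is a co-commuting square. Substituting these three identities into the displayed chain delivers at once $1\geq \lambda(P,Q)\geq [M:Q]^{-1}$ together with the assertion that equality on the right holds if and only if $(N,P,Q,M)$ is co-commuting.

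For the interchanged statement I apply the ``interchange'' half of \Cref{whenequalityholds} to the dual quadruple, obtaining $1\geq\lambda(P_1,Q_1)\geq[P_1:M]^{-1}$ with equality iff the dual is a commuting square; since $[P_1:M]=[M:P]$ and $\lambda(P_1,Q_1)=\lambda(Q,P)$ by \Cref{dual}, this reads $1\geq\lambda(Q,P)\geq[M:P]^{-1}$ with equality iff $(N,P,Q,M)$ is co-commuting. The two equality criteria are consistent because the co-commuting condition is symmetric in $P$ and $Q$: the defining relation $E^{M_1}_{Q_1}E^{M_1}_{P_1}=E^{M_1}_{P_1}E^{M_1}_{Q_1}=E^{M_1}_{M}$ is unchanged when $P_1$ and $Q_1$ are swapped.

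I expect the only delicate point to be the index bookkeeping that matches the abstract quadruple of \Cref{whenequalityholds} to the concrete dual $(M,Q_1,P_1,M_1)$ --- in particular, keeping straight that the lower bound $[P:N]^{-1}$ of that corollary becomes $[Q_1:M]^{-1}=[M:Q]^{-1}$ and not $[M:P]^{-1}$. As a cross-check (and an alternative route that avoids \Cref{dual} for the inequality), one can argue directly: by \Cref{imp1}, $\lambda(P,Q)=[M:P]\,\tr(e_Pe_Q)$, and the inequality $\tr(e_Pe_Q)\geq \tr(e_P)\tr(e_Q)$ --- which is the trace of $e_{P_1}e_{Q_1}e_{P_1}\geq e^{M_1}_{M}$ transported through \Cref{fact2}(1) --- gives $\lambda(P,Q)\geq[M:Q]^{-1}$ immediately. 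I would nonetheless favour the duality argument, since it hands the equality case back to the already-established commuting-square criterion of \Cref{whenequalityholds} at no extra cost.
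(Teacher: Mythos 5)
Your proof is correct, and for the substantive half --- the equivalence between the co-commuting square condition and $\lambda(P,Q)=[M:Q]^{-1}$ --- it is exactly the paper's argument: pass to the dual quadruple $(M,Q_1,P_1,M_1)$, apply \Cref{whenequalityholds} there, and translate back via $[Q_1:M]=[M:Q]$ and \Cref{dual}. The only divergence is in how you obtain the inequality $\lambda(P,Q)\geq[M:Q]^{-1}$: you route it through the dual quadruple as well (and again in your cross-check via \Cref{imp1} and $\tr(e_Pe_Q)\geq\tr(e_P)\tr(e_Q)$), whereas the paper gets it in one line directly from \Cref{pimsner-popa}, observing that the set of constants $t$ working for all $x\in M_+$ is contained in the set working for all $x\in P_+$, so $\lambda(P,Q)\geq\lambda(M,Q)=[M:Q]^{-1}$ by \Cref{pimsner}. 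The paper's monotonicity argument is worth internalizing because it needs no irreducibility at all, while your derivation genuinely uses $N'\cap M=\C$ (both \Cref{whenequalityholds} and \Cref{dual} require it); on the other hand, your version is uniform with the equality case, and your explicit verification that the dual quadruple is irreducible --- $M'\cap M_1=J(N'\cap M)J=\C$ --- supplies a detail the paper leaves implicit when it invokes \Cref{whenequalityholds} for $(M,Q_1,P_1,M_1)$.
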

\begin{proof}
 That $\lambda(P,Q)\geq {[M:Q]}^{-1}$ follows from \Cref{pimsner-popa} . Indeed, as 
 $$\{t >0: E_Q(x)\geq tx ~~\forall~~ x\in M_{+}\}\subset \{t >0: E_Q(x)\geq tx ~~\forall~~ x\in P_{+}\},$$ it follows from \Cref{pimsner-popa} that $\lambda(M,Q)\leq \lambda(P,Q)$ and by \Cref{pimsner} we know that $\lambda(M,Q)={[M:Q]}^{-1}.$

  Recall, $(N,P,Q,M)$ is a co-commuting square if and only if $(M,Q_1,P_1,M_1)$ is a commuting square. Therefore, by \Cref{whenequalityholds} , $(N,P,Q,M)$ is a co-commuting square if and only if
 $\lambda(Q_1,P_1)={[Q_1:M]}^{-1}={[M:Q]}^{-1}$ if and only if $\lambda(P,Q)={[M:Q]}^{-1}$ (thanks to \Cref{dual}). This proves the assertion.
\end{proof}
\begin{proposition}\label{prop}
 Let $(N,P,Q,M)$ be a quadruple of type $II_1$ factors with $N^{\prime}\cap M=\C$ and $[M:N]<\infty$. If $[P:N]=2$, then $\lambda(P,Q)$ is either $1$ or $1/2.$
\end{proposition}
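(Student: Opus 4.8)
The plan is to pin down $\lambda(P,Q)$ by showing that the hypothesis $[P:N]=2$ forces the quadruple into one of only two configurations. By \Cref{whenequalityholds} we already know that $\tfrac12=[P:N]^{-1}\le\lambda(P,Q)\le 1$, that $\lambda(P,Q)=1$ holds exactly when $P\subseteq Q$, and that $\lambda(P,Q)=[P:N]^{-1}=\tfrac12$ holds exactly when $(N,P,Q,M)$ is a commuting square. Hence it suffices to establish the dichotomy: either $P\subseteq Q$, or $(N,P,Q,M)$ is a commuting square. I would extract this dichotomy directly from the structure of the index-$2$ inclusion $N\subset P$.

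First I would invoke the standard structure of a subfactor of index $2$: there is a unitary $u\in P$ normalizing $N$, with $u\notin N$ and $P=N+Nu$, so that $\beta:=\mathrm{Ad}(u)|_N$ is an automorphism of $N$ with $\beta(N)=N$, and $E^P_N(a+bu)=a$ for $a,b\in N$. Since the single element $u$ controls everything, I would test it against $Q$ by setting $c:=E^M_Q(u)\in Q$. Using $un=\beta(n)u$ for $n\in N$ together with the bimodule property of $E^M_Q$ over $N\subseteq Q$, I would compute $cn=E^M_Q(un)=E^M_Q(\beta(n)u)=\beta(n)c$ for all $n\in N$; that is, $c$ intertwines $\mathrm{id}_N$ and $\beta$.

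The heart of the argument is then to feed this intertwining relation into the irreducibility hypothesis. Taking adjoints gives $c^*\beta(n)=nc^*$, and combining the two relations yields $c^*c\in N'\cap M$ and $cc^*\in\beta(N)'\cap M=N'\cap M$. Since $N'\cap M=\C$, both $c^*c$ and $cc^*$ are scalars, and comparing traces shows $c^*c=cc^*=\gamma\,1$ for a single $\gamma\ge 0$. This splits the analysis into exactly two cases. If $\gamma>0$, then $v:=\gamma^{-1/2}c$ is a unitary in $Q$ implementing $\beta$; since $u$ also implements $\beta$, the element $u^*v$ commutes with $N$, so $u^*v\in N'\cap M=\C$, forcing $u\in Q$ and hence $P=N+Nu\subseteq Q$, which gives $\lambda(P,Q)=1$. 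If $\gamma=0$, then $E^M_Q(u)=0$, so $E^M_Q(a+bu)=a=E^P_N(a+bu)$ for all $a,b\in N$; since every element of $P$ has this form, $E^M_Q|_P=E^P_N$, which is precisely the commuting square condition, and by \Cref{whenequalityholds} (or directly from \Cref{pimsner} applied to $N\subset P$, since then $\lambda(P,Q)=\lambda(P,N)$) we obtain $\lambda(P,Q)=[P:N]^{-1}=\tfrac12$.

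I expect the main obstacle to be the very first step, namely invoking and correctly normalizing the index-$2$ decomposition $P=N+Nu$ with $u$ a unitary normalizer of $N$; this is classical but should be cited and justified with care, and one should verify that only the normalizing unitarity of $u$ (and not, say, self-adjointness or $u^2=1$) is needed for the intertwiner computation. The remaining steps — the bimodule identity for $E^M_Q$ and the passage $c^*c,\,cc^*\in N'\cap M=\C$ — are routine once the relation $cn=\beta(n)c$ is in hand.
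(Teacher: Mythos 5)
Your proof is correct, but it takes a genuinely different route from the paper's. The paper reduces to \Cref{whenequalityholds} exactly as you do, and also works with a single unitary $u$ satisfying $P=N+Nu$, but it produces the dichotomy through the auxiliary-operator machinery: taking $\{1,u\}$ to be a unitary orthonormal Pimsner--Popa basis for $P/N$ (Jones), one has $p(P,Q)=e_Q+ue_Qu^*$, and \Cref{bdlr} forces $p(P,Q)e_Q=\lambda e_Q$ where $\lambda=\lVert p(P,Q)\rVert$; the injectivity of $x\mapsto xe_Q$ on $M$ (Pimsner--Popa, Lemma~1.2) then yields the scalar identity $uE_Q(u^*)=\lambda-1$, and the two cases are $\lambda=1$ (commuting square, via the angle/norm characterization quoted in the proof of \Cref{whenequalityholds}) and $\lambda\neq 1$ (then $u^*=(\lambda-1)^{-1}E_Q(u^*)\in Q$, so $P\subset Q$). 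You replace this machinery with Goldman's theorem (the crossed-product structure of an index-$2$ inclusion) plus a direct intertwining argument: irreducibility makes $c^*c$ and $cc^*$ scalars, and your dichotomy $\gamma>0$ versus $\gamma=0$ plays precisely the role of the paper's $\lambda\neq 1$ versus $\lambda=1$ (indeed $\gamma=|\lambda-1|^2$). What each approach buys: yours is more self-contained, avoiding $p(P,Q)$ and the appeal to the angle results of \cite{BDLR,BG}, and it identifies the commuting square directly by computing $E^M_Q|_P=E^P_N$ rather than through the norm criterion; the paper's proof, on the other hand, needs no normalizing unitary --- a unitary orthonormal basis $\{1,u\}$ suffices, with no requirement that $uNu^*=N$, which is exactly the extra structure your intertwining computation consumes. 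Two small points to tighten in your write-up: (i) the normalizing unitary must be justified by citing Goldman's theorem (or the corresponding statement for index-$2$ subfactors in Jones' paper), as you anticipate; (ii) in the case $\gamma=0$ what you literally obtain is the one-sided identity $E^M_Q E^M_P=E^M_N$, whereas the paper's definition of commuting square requires both compositions to equal $E^M_N$ --- this follows by taking adjoints of $e_Qe_P=e_N$ on $L^2(M)$, but the step should be stated.
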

\begin{proof}
 Let $\{1,u\}$ be a unitary orthonormal Pimsner-Popa basis for $P/N$ (which exists by \cite[Corollary 3.4.3]{Jo}). Thus, $p(P,Q)=e_Q+ue_Qu^*$. By \Cref{bdlr}, $p(P,Q)e_Q=\lambda e_Q$ and hence $\big(1+ uE_Q(u^*)\big)e_Q=\lambda e_Q.$
 Thus, by \cite[Lemma 1.2]{PP}, we obtain \begin{equation}\label{final}\lambda-1=  uE_Q(u^*).\end{equation} Now two cases arise. 
 If $\lambda=1$ then $(N,P,Q,M)$ is a commuting square (as already observed in the proof of \Cref{whenequalityholds})and hence, by \Cref{whenequalityholds}, $\lambda(P,Q)=1/2.$ When $\lambda\neq 1$, by \Cref{final}, $u\in Q$ and therefore $P\subset Q$. 
 So, by \Cref{whenequalityholds} again, we conclude that $\lambda(P,Q)=1.$ 
\end{proof}
By \Cref{whenequalityholds} we know that if $N\subset M$ is irreducible then $(N,P,Q,M)$ is a commuting square if and only if $\lambda(P,Q)={[P:N]}^{-1}$. Similarly, \Cref{whenequalityholds2} says that if $N\subset M$ is irreducible then $(N,P,Q,M)$ is a co-commuting square if and only if $\lambda(P,Q)={[M:Q]}^{-1}$. Unfortunately, we are not able to prove above formulae for the general case (i.e., without assuming irreducibility). However, as a partial progress we have the following result.
\begin{theorem}\label{commuting}
 Let $(N,P,Q,M)$ be a quadruple of type $II_1$ factors with $[M:N]<\infty$. 
 \begin{enumerate}
  \item If $(N,P,Q,M)$ is a commuting square then $\lambda(P,Q)={[P:N]}^{-1}$ and $\lambda(Q,P)={[Q:N]}^{-1}.$
  \item If $(N,P,Q,M)$ is a co-commuting square then $\lambda(P,Q)={[M:Q]}^{-1}$ and $\lambda(Q,P)={[M:P]}^{-1}.$
 \end{enumerate}

\end{theorem}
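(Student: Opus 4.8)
The plan is to handle the two parts separately and to reduce each of them to the single-subfactor Pimsner--Popa identity \Cref{pimsner}, so that the hypothesis $N^{\prime}\cap M=\C$ never enters. For part (1), the starting point is the elementary observation that in a commuting square $E^M_Q$ agrees with $E^M_N$ on $P$. Indeed, for $x\in P_{+}$ we have $E^M_P(x)=x$, so the defining relation $E^M_QE^M_P=E^M_N$ gives $E^M_Q(x)=E^M_QE^M_P(x)=E^M_N(x)$. Hence the two admissible sets in \Cref{pimsner-popa} coincide,
\[
\{t>0 : E^M_Q(x)\geq tx\ \forall x\in P_{+}\}=\{t>0 : E^M_N(x)\geq tx\ \forall x\in P_{+}\},
\]
so $\lambda(P,Q)=\lambda(P,N)$, and \Cref{pimsner} applied to the subfactor $N\subset P$ yields $\lambda(P,N)=[P:N]^{-1}$. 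Interchanging the roles of $P$ and $Q$ (now using $E^M_PE^M_Q=E^M_N$, so $E^M_P|_Q=E^M_N|_Q$) gives $\lambda(Q,P)=[Q:N]^{-1}$.

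For part (2), I would pass to the basic construction. By the definition of a co-commuting square the dual quadruple $(M,Q_1,P_1,M_1)$ is itself a commuting square of $II_1$ factors of finite index, so part (1), now established in full generality, applies to it verbatim. Reading off the indices $[Q_1:M]=[M:Q]$ and $[P_1:M]=[M:P]$, this already produces $\lambda(Q_1,P_1)=[Q_1:M]^{-1}=[M:Q]^{-1}$ and $\lambda(P_1,Q_1)=[P_1:M]^{-1}=[M:P]^{-1}$. It then remains only to transport these values down to the original quadruple, that is, to establish the duality $\lambda(P,Q)=\lambda(Q_1,P_1)$ and $\lambda(Q,P)=\lambda(P_1,Q_1)$.

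The hard part will be exactly this last duality in the non-irreducible setting: the available version, \Cref{dual}, is deduced from the trace formula of \Cref{imp1}, which assumes $N^{\prime}\cap M=\C$. To sidestep that hypothesis I would instead prove the single inequality $\lambda(P,Q)\leq [M:Q]^{-1}$ by hand, the reverse inequality $\lambda(P,Q)\geq\lambda(M,Q)=[M:Q]^{-1}$ being automatic from \Cref{pimsner-popa} (enlarging the test set from $P_{+}$ to $M_{+}$ only shrinks the admissible $t$'s) together with \Cref{pimsner}. For the nontrivial bound I would translate the inequality $E^{M_1}_{P_1}(y)\geq ty$ over $y\in(Q_1)_{+}$, which governs $\lambda(Q_1,P_1)$, into an inequality for $E^M_Q$ over $P_{+}$ via the Jones basic construction: using the push-down lemma together with the commuting-square identity $E^{M_1}_{P_1}|_{Q_1}=E^{M_1}_{M}|_{Q_1}$ furnished by the co-commuting hypothesis, one matches positive elements $\sum_i m_i e_Q m_i^{*}\in(Q_1)_{+}$ with the corresponding elements of $P_{+}$ and checks that the two optimal constants agree. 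This compression argument, carried out with Pimsner--Popa bases rather than with the trace formula of \Cref{imp1}, is where the genuine work lies; the remaining bookkeeping with indices is routine.
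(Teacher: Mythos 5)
Your part (1) is correct and is essentially the paper's own argument in streamlined form: the paper likewise observes that on $P_{+}$ the commuting-square condition forces $E^M_Q=E^P_N$, gets the lower bound from Pimsner--Popa's inequality $E^P_N(x)\geq {[P:N]}^{-1}x$, and gets sharpness by testing on the Jones projection $e^N_{P_{-1}}$; your identification of the two admissible sets packages both directions into a single appeal to \Cref{pimsner}. No objection there.

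Part (2), however, has a genuine gap. Your reduction needs the duality $\lambda(P,Q)=\lambda(Q_1,P_1)$ (or at least the inequality $\lambda(P,Q)\leq\lambda(Q_1,P_1)$) \emph{without} the irreducibility hypothesis, and that is precisely what is unavailable: the paper's \Cref{dual} deduces this duality from the trace formula of \Cref{imp1}, whose proof genuinely uses $N^{\prime}\cap M=\C$, and the paper states explicitly that it cannot establish such statements in the general (non-irreducible) case --- that is why \Cref{commuting} is offered only as ``partial progress.'' Your proposed substitute --- the push-down lemma plus ``matching'' positive elements $\sum_i m_ie_Qm_i^*\in (Q_1)_{+}$ with elements of $P_{+}$ and ``checking that the two optimal constants agree'' --- is not an argument but a placeholder for exactly the hard step, as you yourself concede (``where the genuine work lies''). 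The easy containment of test sets only gives $\lambda(P,Q)\geq{[M:Q]}^{-1}$; to get the reverse bound one needs an explicit handle on $E^M_Q$ evaluated on elements of $P$, and nothing in the sketch supplies it. The paper's proof goes in the opposite direction, downward rather than upward: co-commutation of $(N,P,Q,M)$ makes the downward quadruple $(N_{-1},Q_{-1},P_{-1},N)$ a commuting square; by \cite[Proposition 2.20]{BDLR} (no irreducibility needed) the auxiliary operator $p(P_{-1},Q_{-1})$ is then a projection dominating $e^N_{P_{-1}}$; via \Cref{downward} and \Cref{fact} this reads $[M:Q]\,E^M_Q(e^N_{P_{-1}})\geq e^N_{P_{-1}}$, and writing an arbitrary $x\in P_{+}$ as $\sum_{i,j}b^*_jE^N_{P_{-1}}(a^*_ja_i)e^N_{P_{-1}}b_i$ with $a_i,b_j\in N$ propagates this single-projection inequality to $E^M_Q(x)\geq {[M:Q]}^{-1}x$; sharpness follows because the projection $p(P_{-1},Q_{-1})$ has norm one. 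To salvage your outline you must either actually prove the non-irreducible duality (which would be a result the paper itself does not possess) or replace the upward passage to $(M,Q_1,P_1,M_1)$ by this downward argument.
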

\begin{proof}
 If $x\in P_{+}$, we have, by commuting square condition, $E^M_Q(x)=E^M_Q\circ E^M_P(x)=E^P_N(x)$. Thanks to \cite[Proposition 2.1]{PP}, we know that $E^P_N(x) \geq {[P:N]}^{-1} x$ and so $E^M_Q(x)\geq  {[P:N]}^{-1} x$.
 Now, if $E^M_Q(x)\geq tx$ for all $x\in P_{+}$ and for some scalar $t$, we see that $E^M_Q(e^N_{P_{-1}})= E^P_N(e^N_{P_{-1}})={[P:N]}^{-1}$ and so ${[P:N]}^{-1}\geq te^N_{P_{-1}}$.
 Taking norm to the both sides of the last equation we  get ${[P:N]}^{-1}\geq t.$ Therefore, $\lambda(P,Q)= {[P:N]}^{-1}.$ 
 Similarly, $\lambda(Q,P)={[Q:N]}^{-1}.$ This completes the proof of item (1).

We now prove item(2). Since $(N,P,Q,M)$ is a co-commuting square it is easy to see that $(N_{-1},Q_{-1},P_{-1},N)$ is a commuting square. According to \cite[Proposition 2.20]{BDLR},
$p(P_{-1},Q_{-1})$ is a projection such that $p(P_{-1},Q_{-1})\geq e^N_{P_{-1}}$. By \Cref{downward} and \Cref{fact} we see that 
\begin{equation}\label{commsquare}
[M:Q]E^M_Q(e^N_{P_{-1}})\geq e^N_{P_{-1}}.
\end{equation}\color{black}
For any $x\in P_{+}$, as in the proof of \Cref{imp1}, we see that $x= \sum_{i,j} b^*_jE^N_{P_{-1}}(a^*_ja_i)e^N_{P_{-1}}b_i$ for some $a_i,b_j\in N$. Therefore,
\begin{align*}
 x  \leq  & [M:Q]\sum_{i,j} b^*_jE^N_{P_{-1}}(a^*_ja_i)E^M_Q(e^N_{P_{-1}})b_i~~~~~~\text{[Using \Cref{commsquare}]}\\
 =& [M:Q] E^M_Q\big(\sum_{i,j}b^*_jE^N_{P_{-1}}(a^*_ja_i)e^N_{P_{-1}}b_i\big)\\
 =& [M:Q]E^M_Q(x).
\end{align*}
Thus, we have proved that for any $x\in P_{+}$ we have $E^M_Q(x)\geq {[M:Q]}^{-1} x$. Also, if $E^M_Q(x)\geq sx$ for $x\in P_{+}$ and for some scalar $s$ then,
in particular, $E^M_Q(e^N_{P_{-1}})\geq s e^N_{P_{-1}}$ and hence $p(P_{-1},Q_{-1})\geq [M:Q] s e^N_{P_{-1}}$. Thus, $\lVert p(P_{-1},Q_{-1})\rVert =1\geq s[M:Q].$
So, ${[M:Q]}^{-1}\geq s.$ We conclude that $\lambda(P,Q)={[M:Q]}^{-1}$ and similarly $\lambda(Q,P)={[M:P]}^{-1}.$ This completes the proof.
\end{proof}
\begin{remark}
 Since the quadruple $(N,M,N,M)$ is  both a commuting and a co-commuting square, in view of the fact that $\lambda(M,N)={[M:N]}^{-1}$ (see \cite{PP}[Theorem 2.2]), \Cref{commuting} can be thought of as a natural generalization of \cite{PP}[Theorem 2.2].
\end{remark}
\color{black}
\begin{example}
 Let $K\subset L$ be a subfactor of finite index and $G$ be a finite group acting outerly on $L$ so that we obtain a quadruple $(N=K, P=L,Q=K\rtimes G,M=L\rtimes G).$ Then 
 the quadruple is a commuting square. Thus, $\lambda(L,K\rtimes G)=[L:K]$ and $\lambda(K\rtimes G,L)=|G|.$ 
\end{example}

 \begin{remark}
 We feel that $\lambda(P,Q)$ (and $\lambda(Q,P)$) is a powerful invariant in determining the relative position between the intermediate subfactors $N\subset P,Q\subset M$, and demands a deeper investigation.
\end{remark}

\section{Relative entropy and intermediate subfactors}
Connes and St\o rmer in \cite{CS} introduced a notion of entropy of a finite dimensional subalgebra and more generally, the relative
entropy between two finite dimensional subalgebras of a finite von Neumann algebra as an extension of the notion of entropy from classical ergodic theory. 
Pimsner and Popa in \cite{PP} had observed that this notion does not depend on the fact that the subalgebras are finite-dimensional and they further extended it to a notion of relative entropy between intermediate von Neumann subalgebras of a finite von Neumann algebra.
We briefly recall the definition below.
\begin{definition}(Connes-St\o rmer)\label{cs}
  Let $\eta:[0,\infty)\rightarrow (-\infty,\infty)$ be defined by $\eta(t)=-t\log(t)$ for $t>0$ and $\eta(0)=0.$  Let $M$ be a finite von Neumann algebra with a fixed normalized trace $\tr$ and $P$ and $Q$ be von Neumann subalgebras of $M$.
The entropy of $P$ relative to $Q$ with respect to $\tr$ is 
$$H_{\tr}(P|Q)=\text{sup}\sum_i\bigg(\tr\big(\eta(E_Q(x_i))\big)-\tr\big(\eta(E_P(x_i))\big)\bigg),$$ where the supremum is taken over all finite partitions of unity $1=\sum_ix_i$ in $M$, and $E_P$ and $E_Q$ are the $\tr$-preserving conditional expectations on $P$ and $Q$, respectively.
If $M$ is type $II_1$ factor we often suppress $\tr$ in the notation for the relative entropy as the trace is uniquely determined in this case.
  \end{definition}
 Below, we see that the Pimsner-Popa probabilistic constant is closely related with the relative entropy. Recall, in \cite[Corollary 4.6]{PP}, Pimsner and Popa had proved that if $N\subset M$ is irreducible then 
 \begin{equation}\label{con}
 H(M|N)=-\log \lambda(M,N).
 \end{equation}
More generally, we consider a quadruple of type $II_1$ factors $(N,P,Q,M)$ with $[M:N]< \infty$ and $N^{\prime}\cap M=\mathbb{C}$ and obtain a formula of $H(P|Q)$ (and also of $H(Q|P)$) which generalizes \Cref{con}. This is the main result of this article.
 First we need a lemma.
 \begin{lemma}
  \label{liu}
  Let $(N,P,Q,M)$ be an irreducible quadruple such that $[M:N]<\infty$. Then,
 $$\tr\bigg(\eta\big(E^M_P(e^N_{Q_{-1}})\big)\bigg)=-\frac{1}{[Q:N]}\log\big(\lambda(Q,P)\big).$$
 \end{lemma}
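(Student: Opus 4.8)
The plan is to compute the two traces $\tr\big(\eta(E^M_P(e^N_{Q_{-1}}))\big)$ by first understanding the operator $E^M_P(e^N_{Q_{-1}})$ explicitly. The key observation is that \Cref{downward} gives us a handle on this expectation: we have $p(Q_{-1},P_{-1})=[P_{-1}:N_{-1}]E^M_P(e^N_{Q_{-1}})$, so that $E^M_P(e^N_{Q_{-1}})=\frac{1}{[P_{-1}:N_{-1}]}p(Q_{-1},P_{-1})=\frac{1}{[M:P]}p(Q_{-1},P_{-1})$, using \Cref{fact} [item (3)]. Now by \Cref{crucial} applied to the downward quadruple $(N_{-1},Q_{-1},P_{-1},N)$, the operator $\frac{1}{\lambda_{-1}}p(Q_{-1},P_{-1})$ is a \emph{projection}, where $\lambda_{-1}=\lVert p(Q_{-1},P_{-1})\rVert$. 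This is the crucial structural fact: since $\eta$ is being applied to a positive scalar multiple of a projection, the computation collapses to a simple form.

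Concretely, I would set $f:=\frac{1}{\lambda_{-1}}p(Q_{-1},P_{-1})$, a projection, so that $E^M_P(e^N_{Q_{-1}})=\frac{\lambda_{-1}}{[M:P]}f$. Writing $c:=\frac{\lambda_{-1}}{[M:P]}$, we get $\eta(cf)=\eta(c)f$ because $\eta(c\cdot 0)=0=\eta(c)\cdot 0$ and $\eta(c\cdot 1)=\eta(c)$ on the range of $f$; more precisely $\eta(cf)=-cf\log(cf)=(-c\log c)f=\eta(c)f$ as operators, since $f$ is a projection. Therefore $\tr\big(\eta(E^M_P(e^N_{Q_{-1}}))\big)=\eta(c)\,\tr(f)$. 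It remains to identify the two numerical quantities $c$ and $\tr(f)$.

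For the eigenvalue $\lambda_{-1}$, I would use \Cref{fact2} [item (4)] applied to the downward quadruple together with \Cref{fact} [item (2)]: $\lambda_{-1}=[N:N_{-1}]\,\tr(e^N_{P_{-1}}e^N_{Q_{-1}})=[M:N]\cdot\frac{[M:P]}{[Q:N]}\tr(e_Pe_Q)$, so that $c=\frac{\lambda_{-1}}{[M:P]}=\frac{[M:N]\tr(e_Pe_Q)}{[Q:N]}=\frac{[M:N]\tr(e_Pe_Q)}{[Q:N]}$. By \Cref{imp1} this is exactly $[M:N]\tr(e_P)\lambda(Q,P)/[Q:N]\cdot(\text{something})$ — here I would simplify carefully: since $\tr(e_P)=[M:P]^{-1}$ and $\lambda(Q,P)=\tr(e_Pe_Q)/\tr(e_Q)$ with $\tr(e_Q)=[M:Q]^{-1}$, one checks that $c=\lambda(Q,P)$. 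For the trace of the projection, I would compute $\tr(f)=\frac{1}{\lambda_{-1}}\tr\big(p(Q_{-1},P_{-1})\big)$ by expanding $p(Q_{-1},P_{-1})$ in a Pimsner-Popa basis and using $\tr(e_1)=[M:N]^{-1}$ and the trace-preserving property of the expectations; alternatively $\tr(f)=\tr(e^N_{Q_{-1}})/c$-type relations from \Cref{fact} can be leveraged, and I expect $\tr(f)=\frac{1}{[Q:N]}$. Assembling, $\tr\big(\eta(E^M_P(e^N_{Q_{-1}}))\big)=\eta\big(\lambda(Q,P)\big)\cdot\frac{1}{[Q:N]}=-\frac{1}{[Q:N]}\lambda(Q,P)\log\big(\lambda(Q,P)\big)$.

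Here I notice a discrepancy with the stated formula, which omits the factor $\lambda(Q,P)$; the main obstacle will be reconciling this, and I suspect the intended argument is slightly different — perhaps the partition in the definition of $H$ is chosen so that a summation over basis elements produces the stated clean form, or the factor $\lambda(Q,P)=c$ is absorbed when one realizes $c\cdot\tr(f)$ equals $\tr(E^M_P(e^N_{Q_{-1}}))=\tr(e^N_{Q_{-1}})=[Q:N]^{-1}$. The cleanest route, which I would adopt, is to bypass the projection decomposition and instead directly use $\tr\big(\eta(E^M_P(e^N_{Q_{-1}}))\big)=\eta(c)\tr(f)$ and then rewrite $\eta(c)\tr(f)=-c\log(c)\tr(f)=-\log(c)\cdot c\,\tr(f)=-\log(c)\tr(E^M_P(e^N_{Q_{-1}}))=-\log\big(\lambda(Q,P)\big)\tr(e^N_{Q_{-1}})=-\frac{1}{[Q:N]}\log\big(\lambda(Q,P)\big)$, which matches the claim exactly. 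The delicate point requiring care is justifying $\eta(cf)=\eta(c)f$ for the projection $f$ and confirming $c=\lambda(Q,P)$ via the trace identities in \Cref{fact} and \Cref{imp1}.
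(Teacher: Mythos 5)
Your final argument is correct and follows essentially the same route as the paper: both proofs use \Cref{downward} and \Cref{bdlr} (applied to the downward quadruple) to write $E^M_P(e^N_{Q_{-1}})=cf$ with $f$ a projection, identify the scalar $c=\frac{\tr(e_Pe_Q)}{\tr(e_Q)}=\lambda(Q,P)$ via \Cref{fact}, \Cref{fact2} and \Cref{imp1}, and then use $\eta(cf)=\eta(c)f$. The only difference is the last bookkeeping step: the paper computes $\tr(f)=\frac{1}{[M:N]\tr(e_Pe_Q)}$ explicitly from $\tr\big(p(Q_{-1},P_{-1})\big)=\frac{[M:Q]}{[P:N]}$, whereas you evaluate $c\,\tr(f)=\tr\big(E^M_P(e^N_{Q_{-1}})\big)=\tr(e^N_{Q_{-1}})=\frac{1}{[Q:N]}$ by trace-preservation of $E^M_P$, which is a slightly cleaner finish. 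Do note that the ``discrepancy'' you agonized over mid-proof was an artifact of your incorrect guess $\tr(f)=\frac{1}{[Q:N]}$: the true value is $\frac{1}{[M:N]\tr(e_Pe_Q)}$ (so that $c\,\tr(f)=\frac{1}{[Q:N]}$ exactly, with no stray factor of $\lambda(Q,P)$), and your final route correctly sidesteps this by never computing $\tr(f)$ in isolation.
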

\begin{proof}Following \Cref{n} , suppose $(N_{-1},Q_{-1},P_{-1},N)$ is a downward basic construction of $(N,P,Q,M)$ and $p(Q_{-1},P_{-1})$ is the corresponding auxiliary operator.
 By \Cref{downward}, 
 \begin{equation}\label{e1}
 \tr\bigg(\eta\big(E^M_P(e^N_{Q_{-1}})\big)\bigg)=\tr\bigg(\eta\big(\frac{1}{[P_{-1}:N_{-1}]}p(Q_{-1},P_{-1}))\big)\bigg).
 \end{equation}
 Now, using \Cref{fact2} (item (4)) and \Cref{bdlr} we see that $\frac{1}{[M:N]\tr(e^N_{P_{-1}}e^N_{Q_{-1}})}p(Q_{-1},P_{-1})$ is a projection. Denote this projection by $f$. By \Cref{fact},
 $$f=\frac{1}{[M:P][M:Q]\tr(e_Pe_Q)}p(Q_{-1},P_{-1}).$$
 Since $p(Q_{-1},P_{-1})=[Q_{-1}:N_{-1}] E^{N^{\prime}_{-1}}_{Q^{\prime}_{-1}}(e^N_{P_{-1}})$, it follows that $$\tr(p(Q_{-1},P_{-1})=\frac{[Q_{-1}:N_{-1}]}{[N:P_{-1}]}=\frac{[M:Q]}{[P:N]},$$ and so
 \begin{equation} \label{tracef} \tr(f)= \frac{1}{[M:N]tr(e_Pe_Q)}.\end{equation}
 Since $[P_{-1}:N_{-1}]=[M:P]$, \Cref{e1} implies that 
 $$\tr\bigg(\eta\big(E^M_P(e^N_{Q_{-1}})\big)\bigg)= \tr\bigg(\eta\big(\frac{\tr(e_Pe_Q)}{\tr(e_Q)}f\big)\bigg).$$
 It follows easily (see \cite{JLW}) that if $\alpha$ is a scalar then $\eta(\alpha f)=\eta(\alpha) f.$ Therefore, by \Cref{tracef} we obtain
 $$\tr\bigg(\eta\big(E^M_P(e^N_{Q_{-1}})\big)\bigg)=\tr\bigg(\eta\big(\frac{\tr(e_Pe_Q)}{\tr(e_Q)}\big)f\bigg)=\frac{1}{[M:N]\tr(e_Pe_Q)}\eta\big(\frac{\tr(e_Pe_Q)}{\tr(e_Q)}\big).$$
In other words, $$\tr\bigg(\eta\big(E^M_P(e^N_{Q_{-1}})\big)\bigg)=-\frac{1}{[Q:N]}\log\big(\frac{\tr(e_Pe_Q)}{\tr(e_Q)}\big).$$ The proof is now complete once we apply \Cref{imp1}.
 \end{proof}

\begin{theorem}\label{main}
 Let $(N,P,Q,M)$ be an irreducible quadruple such that $[M:N]<\infty$. Then, 
 $H(Q|P)=-\log\big(\lambda(Q,P)\big).$
\end{theorem}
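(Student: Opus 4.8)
The plan is to establish the two matching inequalities $H(Q|P)\ge -\log\lambda(Q,P)$ and $H(Q|P)\le -\log\lambda(Q,P)$. Recall from \Cref{cs} that
$$H(Q|P)=\sup \sum_i\Big(\tr\big(\eta(E^M_P(x_i))\big)-\tr\big(\eta(E^M_Q(x_i))\big)\Big),$$
the supremum being taken over all finite partitions of unity $1=\sum_i x_i$ in $M$. Since any single admissible partition produces a lower bound for this supremum, the inequality $H(Q|P)\ge -\log\lambda(Q,P)$ should be obtained by exhibiting one well-chosen partition and evaluating its contribution with the help of \Cref{liu}; the reverse inequality, being a statement about \emph{all} partitions, will instead be imported from the general theory of Pimsner and Popa.

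For the lower bound I would work in the downward basic construction $(N_{-1},Q_{-1},P_{-1},N)$ of \Cref{n} and use the Jones projection $f:=e^N_{Q_{-1}}$, which lies in $Q$ and has trace $[Q:N]^{-1}$. The idea is to spread $f$ into a partition of unity inside $Q$ by translating it: choosing a Pimsner--Popa basis of $N$ over $Q_{-1}$ one writes $\sum_k u_k f u_k^*=1$ with each $u_k\in N$, so that $\{x_k:=u_k f u_k^*\}$ is a partition of unity with all $x_k\in Q$. Because $x_k\in Q$ we have $E^M_Q(x_k)=x_k$, which (when the $x_k$ are genuine projections) annihilates the second family of terms; and because $N\subseteq P$ we may pull the translating elements through $E^M_P$, so that each surviving term equals $\tr(\eta(E^M_P(f)))$. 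Here the irreducibility hypothesis enters decisively: exactly as in the proof of \Cref{liu} (via \Cref{bdlr} and \Cref{downward}) one has $E^M_P(f)=\lambda(Q,P)\,g$ for an honest projection $g$, and this is what makes $\tr(\eta(E^M_P(f)))$ computable. Summing the $[Q:N]$ equal contributions then cancels the factor $[Q:N]^{-1}$ appearing in \Cref{liu} and yields $\sum_k \tr(\eta(E^M_P(x_k)))=-\log\lambda(Q,P)$, whence $H(Q|P)\ge -\log\lambda(Q,P)$.

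For the reverse inequality I would invoke the general Pimsner--Popa bound relating relative entropy to the probabilistic constant, namely $H(B_1|B_2)\le -\log\lambda(B_1,B_2)$ for arbitrary von Neumann subalgebras of a finite von Neumann algebra \cite{PP}, applied with $B_1=Q$ and $B_2=P$. Combining the two inequalities yields the asserted equality, and specializing to $P=N$, $Q=M$ recovers \eqref{con}. The main obstacle I anticipate lies in the lower bound: a basis of $N$ over $Q_{-1}$ consisting of \emph{unitaries}, which is what is needed so that the translates $x_k$ are honest orthogonal projections (killing the $E^M_Q$-terms) while still being pullable through $E^M_P$, need not exist when $[Q:N]$ is not an integer, so one must either refine the partition or control the resulting correction terms. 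It is precisely the clean structure $E^M_P(e^N_{Q_{-1}})=\lambda(Q,P)\cdot(\text{projection})$ furnished by irreducibility, together with the trace identities of \Cref{fact} and \Cref{downward}, that should allow these corrections to be absorbed.
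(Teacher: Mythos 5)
Your overall skeleton---splitting the statement into two inequalities, getting the lower bound from a partition of unity built out of $N$-translates of $e^N_{Q_{-1}}$ evaluated via \Cref{liu}, and getting the upper bound from a Pimsner--Popa type estimate---is the same as the paper's. But your lower bound has a genuine gap at exactly the point you flag, and the fix is not a routine refinement: it is the main technical content of the paper's proof. A Pimsner--Popa basis $\{u_k\}$ of $N$ over $Q_{-1}$ does satisfy $\sum_k u_k e^N_{Q_{-1}}u_k^*=1$, but the $u_k$ cannot in general be taken unitary: an orthonormal basis of unitaries forces $[N:Q_{-1}]=[Q:N]$ to be an integer, and even for integer index the existence of such a basis is a special property (available for group crossed products, or for index $2$ via \cite[Corollary 3.4.3]{Jo}), not a general fact. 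Without unitarity the translates $x_k=u_ke^N_{Q_{-1}}u_k^*$ are not projections, so the terms $\tr\big(\eta(E_Q(x_k))\big)$ do not vanish, and the step $\tr\big(\eta(u_kE_P(e^N_{Q_{-1}})u_k^*)\big)=\tr\big(\eta(E_P(e^N_{Q_{-1}}))\big)$, which needs $\eta(uyu^*)=u\eta(y)u^*$, also fails. The paper's resolution, following \cite[Lemma 4.2]{PP}, is Dixmier averaging rather than a basis: since $x=e^N_{Q_{-1}}-[Q:N]^{-1}1$ has trace zero, $0$ lies in $\overline{\mathrm{conv}}\{vxv^*:v\in\mathcal{U}(N)\}$, so for each $\epsilon>0$ there exist honest unitaries $v_1,\dots,v_n\in N$ with $y=\frac{[Q:N]}{n}\sum_iv_ie^N_{Q_{-1}}v_i^*$ satisfying $\lVert y-1\rVert_2<\epsilon^2$; one then truncates by the spectral projection $p$ of $y$ for $[0,1+\epsilon]$, uses the sub-partition $x_i=\frac{[Q:N]}{(1+\epsilon)n}\big(v_ie^N_{Q_{-1}}v_i^*\wedge p\big)$ (whose elements are scalar multiples of projections in $Q$, so the $E_Q$-terms really do drop out), and shows the correction terms are bounded by $\eta(\epsilon^2)$ before letting $\epsilon\to 0$. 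This approximate unitary tiling plus spectral truncation is precisely the idea your proposal is missing, and "control the resulting correction terms" is where all the work lies.

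A secondary problem is the upper bound. There is no ready-made "general Pimsner--Popa bound $H(B_1|B_2)\le-\log\lambda(B_1,B_2)$ for arbitrary von Neumann subalgebras" to import: \cite{PP} proves such statements only for nested pairs $B_2\subset B_1$, and the probabilistic constant for non-nested pairs is Okayasu's extension \cite{O}, whose Proposition 4.1 concerns abelian subalgebras. The paper therefore reruns the argument of \cite[Proposition 3.5]{PP} in the non-nested setting: starting from $E_P\big(E_Q(x)\big)\ge\lambda(Q,P)E_Q(x)$ for $x\in M_+$, using operator monotonicity of $\log$ and the Jensen inequality $\eta(E(y))\ge E(\eta(y))$, and summing over partitions, together with $H(P|M)=0$. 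This direction is the easy one, but in a self-contained proof it must be argued, not cited away.
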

\begin{proof}
To establish  $H(Q|P)\leq -\log\big(\lambda(Q,P)\big)$ (this inequality generalizes an observation of \cite[Proposition 4.1]{O}) we suitably modify the proof of \cite[Proposition 3.5]{PP}.  We provide sufficient details for the sake of self-containment. First note that for any $x\in M_{+}$ we must have
 $$E_P\big(E_Q(x)\big)\geq \lambda(Q,P) E_Q(x).$$
 Since $\log$ is operator increasing we get
 $${\big(E_Q(x)\big)}^{1/2}\log E_P\big(E_Q(x)\big) {\big(E_Q(x)\big)}^{1/2} \geq \log \lambda(Q,P) E_Q(x)  + {\big(E_Q(x)\big)}^{1/2} \log E_Q(x) {\big(E_Q(x)\big)}^{1/2}.$$ Thus,
 $$\tr\bigg(E_P\big(E_Q(x)\big)\log E_P\big(E_Q(x)\big)\bigg)\geq \log \lambda(Q,P) \tr(x) + \tr\bigg(E_Q(x)\log E_Q(x)\bigg).$$ In other words, we get
 $$\tr\bigg(\eta E_P\big(E_Q(x)\big)\bigg)-\tr\bigg(\eta E_Q(x)\bigg)\leq -\log\lambda(Q,P) \tr(x).$$ 
 As $\eta E_P\big(E_Q(x)\big)\geq E_P\big(E_Q(\eta x)\big)$ (see \cite[page 74]{PP}, for instance)  we readily obtain $$\tr(\eta x)-\tr (\eta E_Q(x))\leq -\log\lambda(Q,P) \tr(x).$$ So,
 $$\big(\tr(\eta x)-\tr (\eta E_P(x))\big) + \big(\tr\big(\eta E_P(x)\big) -\tr(\eta E_Q(x))\big)\leq -\log\lambda(Q,P) \tr(x).$$ Summing up over a partition of unity $(x_i)$ in $ M_{+}$ and taking the supremum over all such partitions we get $H(P|M)+H(Q|P)\leq -\log \lambda(Q,P).$  But as $P\subset M$, it follows easily that $H(P|M)=0$ (see \cite[page 75]{PP}). Thus we have proved that 
 \begin{equation}\label{onedirection}
 H(Q|P)\leq -\log \lambda(Q,P).
 \end{equation}
 \smallskip
 
 We now prove the non-trivial implication. The proof is inspired by \cite{PP} (see also \cite{NS}). 
 Note, $e^N_{Q_{-1}}$ is a projection in $Q$ with $\tr(e^N_{Q_{-1}})=\frac{1}{[Q:N]}$. Put $x=e^N_{Q_{-1}}-\frac{1}{[Q:N]}1.$ Then,
  $\tr(x)=0.$ Following \cite[Lemma 4.2]{PP}, let
  $$K_x=\overline{\text{conv}\{vxv^*:v\in \mathcal{U}(N)\}}.$$ A similar calculation as in \cite[Lemma 4.2]{PP} shows that $0\in K_x$ and hence for any fixed $\epsilon>0$ there are unitaries
  $v_1,\ldots, v_n \in N$ such that 
  $$\big\lVert \sum_i \frac{1}{n} v_ixv^*_i\big\rVert_2<{\epsilon}^2\frac{1}{[Q:N]}.$$  Therefore, we see that
  $$\big\lVert \sum_i \frac{[Q:N]}{n} v_ie^N_{Q_{-1}}v^*_i-1\big\rVert_2<{\epsilon}^2.$$
  Put $y=\frac{[Q:N]}{n}\sum_iv_ie^N_{Q_{-1}}v^*_i$. Then,
  \begin{equation}\label{yminusone}
 \lVert y-1\rVert_2<\epsilon^2.\end{equation}
  Let $p$ be the spectral projection of $y$ corresponding to the interval $[0,1+\epsilon].$  Using the following obvious inequality  $$\frac{1}{(1+\epsilon)}t\mathcal{X}_{[0,1+\epsilon]}(t)\leq 1,$$ and then applying functional calculus we easily obtain $\frac{1}{(1+\epsilon)}yp\leq 1.$
Now, put
  $$x_i= \frac{[Q:N]}{(1+\epsilon)n} v_ie^N_{Q_{-1}}v^*_i\wedge p.$$
  
  Thus, $$\sum_i x_i\leq \frac{[Q:N]}{(1+\epsilon)n}\sum_i pv_ie^N_{Q_{-1}}v^*_ip\leq \frac{1}{(1+\epsilon)}yp\leq 1.$$
  
  Now, it follows immediately from \Cref{cs} that 
  \begin{equation}\label{H}
  H(Q|P)\geq \sum_i \tr\big(\eta(E_P(x_i))-\eta(E_Q(x_i))\big).
  \end{equation}
  Since, for any scalar $\alpha, t > 0$ we have $\eta(\alpha t)=\eta(\alpha)t+\alpha \eta(t)$, a simple calculation yields 
$$ \sum_i \tr\big(\eta E_P(x_i)-\eta E_Q(x_i)\big) = \frac{[Q:N]}{(1+\epsilon)n}\sum_i \tr\big(\eta E_P(v_ie^N_{Q_{-1}}v^*_i\wedge p)- \eta E_Q(v_ie^N_{Q_{-1}}v^*_i\wedge p)\big);$$ and so, by Inequality \ref{H} we get
  
\begin{equation}\label{H2} H(Q|P)\geq \sum_i \tr\big(\eta E_P(x_i)-\eta E_Q(x_i)\big) \geq \frac{[Q:N]}{(1+\epsilon)n}\sum_i\tr\big(\eta(E_P(v_ie^N_{Q_{-1}}v^*_i\wedge p))\big).\end{equation}
 By \cite[page 74]{PP} we know that if $a,b\in M_{+},$ then $\tr (\eta(a+b))\leq \tr(\eta a) +\tr(\eta b)$ and so,
 \begin{equation*}
 \tr\bigg(\eta\big(E_P(v_ie^N_{Q_{-1}}v^*_i\wedge p)\big)\bigg)\geq \tr\bigg(\eta\big(E_P(v_ie^N_{Q_{-1}}v^*_i)\big)\bigg)-\tr\bigg(\eta\big(E_P(v_ie^N_{Q_{-1}}v^*_i)-E_P(v_ie^N_{Q_{-1}}v^*_i\wedge p)\big)\bigg).\end{equation*}
  By concavity of $\eta$, it is well-known  that $\eta(v^*xv)\geq v^*\eta(x)v$ for any $v$ with $\lVert v\rVert \leq 1$ and  $x \in M_{+}$ with $\lVert x\rVert  \leq 1$ (see \cite[B.1]{NS}, for instance). Therefore, it follows that $$\tr\bigg(\eta\big(E_P(v_ie^N_{Q_{-1}}v^*_i)\big)\bigg)=\tr\bigg(\eta\big(v_iE_P(e^N_{Q_{-1}})v^*_i\big)\bigg) \geq \tr\bigg(v_i\eta\big(E_P(e^N_{Q_{-1}})\big)v^*_i\bigg)=\tr\bigg(\eta\big(E_P(e^N_{Q_{-1}})\big)\bigg).$$
  Thus, by Inequality \ref{H2}, we obtain
  \begin{equation}\label{H3}
  H(Q|P)\geq  \frac{[Q:N]}{(1+\epsilon)n}\bigg\{\sum_i\tr\big(\eta(E_P(e^N_{Q_{-1}}))\big)- \tr\bigg(\eta\big(E_P(v_ie^N_{Q_{-1}}v^*_i)-E_P(v_ie^N_{Q_{-1}}v^*_i\wedge p)\big)\bigg)\bigg\}.
  \end{equation}
 Now, a similar calculations as in the proof of \cite{PP}[Lemma 4.2] yields
 \begin{equation}\label{H4}
  \tr\bigg(\eta\big(E_P(v_ie^N_{Q_{-1}}v^*_i)-E_P(v_ie^N_{Q_{-1}}v^*_i\wedge p)\big)\bigg)\leq \eta(\epsilon^2). \end{equation} We provide the details for the convenience of the reader. In view of the fact that $\tr\circ \eta \leq \eta\circ \tr$, it is sufficient to prove that 
  \begin{equation}\label{H5}
  \eta\big(\tr(v_ie^N_{Q_{-1}}v^*_i-v_ie^N_{Q_{-1}}v^*_i\wedge p)\big)\leq \eta(\epsilon^2).\end{equation}Since $\tr(e\vee f)=\tr(e)+\tr(f)-\tr(e\wedge f)$ for any projections $e$ and $f$ in $M$ we get
  $$\tr(v_ie^N_{Q_{-1}}v^*_i-v_ie^N_{Q_{-1}}v^*_i\wedge p)\leq 1-\tr(p).$$
 Applying functional calculus we easily see that $y(1-p)\geq (1+\epsilon) (1-p)$ and so
  $${\lVert y-1\rVert}_2^2\geq \tr\big((y-1)^2(1-p)\big)\geq \epsilon^2\tr(1-p).$$ Thus, by Inequality \ref{yminusone}, we obtain $1-\tr(p)\leq {\epsilon}^{-2}{\lVert y-1\rVert}_2^2\leq {\epsilon}^2$. So we obtain
  $$\tr(v_ie^N_{Q_{-1}}v^*_i-v_ie^N_{Q_{-1}}v^*_i\wedge p)\leq {\epsilon}^2.$$
  As $\eta$ is increasing in $[0,{\epsilon}^2]$ for sufficiently small $\epsilon$, we see that the Inequality \ref{H5} is now obvious. This proves the truth of the Inequality \ref{H4}.
  \smallskip
  
Below we show that, using Inequalities \ref{H3} and \ref{H4}, the following inequalities hold true:
  
  \begin{align*}
   H(Q|P)\geq & \frac{[Q:N]}{(1+\epsilon)n}\sum_i \tr\bigg(\eta\big(E_P(e^N_{Q_{-1}})\big)\bigg)-\frac{[Q:N]}{(1+\epsilon)} \eta(\epsilon^2)\\
    = & -\frac{[Q:N]}{(1+\epsilon)}\frac{1}{[Q:N]}\log(\lambda(Q,P))-\frac{[Q:N]}{(1+\epsilon)} \eta(\epsilon^2)~~~~~~[\text{Using~~~\Cref{liu}~~~]}\\
    \geq & -\frac{1}{(1+\epsilon)}\log(\lambda(Q,P))-\frac{[Q:N]}{(1+\epsilon)} \eta(\epsilon^2).
  \end{align*}

Therefore, letting $\epsilon\rightarrow 0$  we conclude that
\begin{equation}\label{otherdirection}
H(Q|P)\geq -\log\big(\lambda(Q,P)\big).\end{equation}
The proof is now complete once we combine Inequality \ref{onedirection} and Inequality \ref{otherdirection}.\color{black}
 \end{proof}
 Applying \Cref{imp1} we immediately obtain the following (possibly useful) formula for the relative entropy.
 \begin{corollary}\label{imp2}
  Let $(N,P,Q,M)$ be an irreducible quadruple such that $[M:N]<\infty$. Then,
  $$H(Q|P)= \log\big(\tr(e_Q)\big)-\log\big(\tr(e_Pe_Q)\big)$$ and $$H(P|Q)= \log\big(\tr(e_P)\big)-\log\big(\tr(e_Pe_Q)\big)$$
 \end{corollary}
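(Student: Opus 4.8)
The plan is to read off the corollary as an immediate consequence of the two principal results already established, \Cref{main} and \Cref{imp1}, since all that remains is to rewrite a logarithm of a quotient. First I would invoke \Cref{main}: for the irreducible quadruple $(N,P,Q,M)$ with $[M:N]<\infty$ it gives $H(Q|P)=-\log\big(\lambda(Q,P)\big)$. Then I would substitute the explicit value $\lambda(Q,P)=\tr(e_Pe_Q)/\tr(e_Q)$ supplied by \Cref{imp1} and apply the elementary identity $\log(a/b)=\log a-\log b$ to obtain
$$H(Q|P)=-\log\Big(\frac{\tr(e_Pe_Q)}{\tr(e_Q)}\Big)=\log\big(\tr(e_Q)\big)-\log\big(\tr(e_Pe_Q)\big),$$
which is precisely the first asserted formula.

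For the second formula I would exploit the symmetry of the hypotheses under interchanging $P$ and $Q$. Since $(N,Q,P,M)$ is again an irreducible quadruple of type $II_1$ factors with finite index, \Cref{main} applies verbatim with the roles of $P$ and $Q$ swapped, yielding $H(P|Q)=-\log\big(\lambda(P,Q)\big)$. Feeding in the companion evaluation $\lambda(P,Q)=\tr(e_Pe_Q)/\tr(e_P)$ from \Cref{imp1} and repeating the same logarithmic manipulation gives
$$H(P|Q)=\log\big(\tr(e_P)\big)-\log\big(\tr(e_Pe_Q)\big),$$
which completes the argument.

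I expect no genuine analytic obstacle at this stage: the substantive work has already been absorbed into \Cref{main} (whose nontrivial half is the lower bound on the relative entropy, established through the partition-of-unity and concavity argument) and \Cref{imp1} (the explicit trace computation of the probabilistic constant). The only point demanding a moment's care is the bookkeeping of which probabilistic constant pairs with which relative entropy: because \Cref{main} is phrased as $H(Q|P)=-\log\lambda(Q,P)$ and \Cref{imp1} gives $\lambda(Q,P)=\tr(e_Pe_Q)/\tr(e_Q)$, it is the trace $\tr(e_Q)$ rather than $\tr(e_P)$ that surfaces in the expression for $H(Q|P)$, with the dual assignment governing $H(P|Q)$. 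Keeping this correspondence straight is the entirety of the subtlety.
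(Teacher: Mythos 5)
Your proposal is correct and is exactly the paper's argument: the corollary is deduced immediately by combining \Cref{main} ($H(Q|P)=-\log\lambda(Q,P)$) with the trace formula for $\lambda(Q,P)$ from \Cref{imp1}, and symmetrically for $H(P|Q)$. Your care about pairing $\tr(e_Q)$ with $H(Q|P)$ is the right (and only) bookkeeping point.
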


\begin{corollary}
 Let $(N,P,Q,M)$ be a quadruple of type $II_1$ factors with $N^{\prime}\cap M=\C$ and $[M:N]<\infty$. Then, $H(Q_1,P_1)= H(P|Q)$ and
 $H(P_1|Q_1)=H(Q|P).$
\end{corollary}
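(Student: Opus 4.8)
The plan is to apply \Cref{main} to the dual quadruple and then translate the resulting probabilistic constants back to the original data via \Cref{dual}. First I would verify that the dual quadruple $(M,Q_1,P_1,M_1)$ satisfies the hypotheses of \Cref{main}. Its index is finite, since $[M_1:M]=[M:N]<\infty$, and it is irreducible because the Jones basic construction preserves triviality of the relative commutant, that is, $M^{\prime}\cap M_1\cong N^{\prime}\cap M=\C$. Hence \Cref{main} applies verbatim to $(M,Q_1,P_1,M_1)$.

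Reading off the correspondence $(N,P,Q,M)\mapsto(M,Q_1,P_1,M_1)$ in the statement of \Cref{main}, the roles of $P$ and $Q$ are played by $Q_1$ and $P_1$ respectively, so I would obtain
\[
H(P_1|Q_1)=-\log\big(\lambda(P_1,Q_1)\big)\quad\text{and}\quad H(Q_1|P_1)=-\log\big(\lambda(Q_1,P_1)\big).
\]
Next I would invoke \Cref{dual}, which gives $\lambda(P_1,Q_1)=\lambda(Q,P)$ and $\lambda(Q_1,P_1)=\lambda(P,Q)$. Substituting these identities yields $H(P_1|Q_1)=-\log\lambda(Q,P)$ and $H(Q_1|P_1)=-\log\lambda(P,Q)$.

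Finally, applying \Cref{main} once more to the original irreducible quadruple $(N,P,Q,M)$ gives $-\log\lambda(Q,P)=H(Q|P)$, and, by the symmetric form of \Cref{main} obtained by interchanging the two intermediate subfactors $P$ and $Q$, also $-\log\lambda(P,Q)=H(P|Q)$. Combining these, I conclude $H(P_1|Q_1)=H(Q|P)$ and $H(Q_1|P_1)=H(P|Q)$, as required. The only nonroutine point is the irreducibility of the dual quadruple, but this is just the standard fact that $M^{\prime}\cap M_1\cong N^{\prime}\cap M$ for the basic construction; everything else is a direct substitution, so I do not expect any genuine obstacle here.
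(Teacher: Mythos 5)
Your proof is correct and takes essentially the same route as the paper: apply \Cref{main} to the dual quadruple $(M,Q_1,P_1,M_1)$, translate the probabilistic constants back via \Cref{dual}, and identify the result with $H(Q|P)$ and $H(P|Q)$ by invoking \Cref{main} on the original quadruple. The only difference is that you explicitly verify the hypotheses for the dual quadruple (in particular $M^{\prime}\cap M_1\cong N^{\prime}\cap M=\C$), a point the paper leaves implicit.
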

\begin{proof}
 By \Cref{main} we obtain $H(P_1|Q_1)=-\log\big(\lambda(P_1,Q_1)\big).$ Using \Cref{dual} we get
 $$H(P_1|Q_1)=-\log\big(\lambda(Q,P)\big).$$ Interchanging $P$ and $Q$ in the above equation we prove the other implication. This completes the proof.
\end{proof}
Similarly, applying \Cref{downwarddual} we get the following.
\begin{corollary}
 Let $(N,P,Q,M)$ be a quadruple of type $II_1$ factors with $N^{\prime}\cap M=\C$ and $[M:N]<\infty$. Then, $H(Q_{-1}|P_{-1})= H(P|Q)$ and
 $H(P_{-1}|Q_{-1})=H(Q|P).$
\end{corollary}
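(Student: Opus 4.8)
The plan is to mirror the proof of the preceding corollary, replacing the passage to the basic construction by the passage to the downward basic construction and invoking \Cref{downwarddual} in place of \Cref{dual}. First I would apply \Cref{main} directly to the downward basic construction quadruple $(N_{-1},Q_{-1},P_{-1},N)$ of $(N,P,Q,M)$, as defined in \Cref{n}. For this to be legitimate I must first check that this new quadruple again satisfies the hypotheses of \Cref{main}, namely that it is an irreducible quadruple of $II_1$ factors with finite index: the index $[N:N_{-1}]=[M:N]$ is finite by construction, and irreducibility $N_{-1}^{\prime}\cap N=\C$ is inherited from $N^{\prime}\cap M=\C$, since the tunnel of a finite-index irreducible (hence extremal) subfactor remains irreducible. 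Reading off the conclusion of \Cref{main} for this quadruple, where $N_{-1},Q_{-1},P_{-1},N$ play the roles of $N,P,Q,M$ respectively, yields $H(P_{-1}|Q_{-1})=-\log\big(\lambda(P_{-1},Q_{-1})\big)$.

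Next I would substitute the value of the probabilistic constant supplied by \Cref{downwarddual}, namely $\lambda(P_{-1},Q_{-1})=\lambda(Q,P)$, to obtain $H(P_{-1}|Q_{-1})=-\log\big(\lambda(Q,P)\big)$. Applying \Cref{main} once more, now to the original quadruple, gives $-\log\big(\lambda(Q,P)\big)=H(Q|P)$, and hence $H(P_{-1}|Q_{-1})=H(Q|P)$. The companion identity $H(Q_{-1}|P_{-1})=H(P|Q)$ then follows by interchanging the roles of $P$ and $Q$ throughout; equivalently, by using the second equality $\lambda(Q_{-1},P_{-1})=\lambda(P,Q)$ of \Cref{downwarddual} together with $H(P|Q)=-\log\big(\lambda(P,Q)\big)$ from \Cref{main}.

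The only genuine subtlety, and the step I would flag as the main obstacle, is the verification that the downward basic construction quadruple is itself irreducible so that \Cref{main} legitimately applies; everything else is a bookkeeping substitution. Since irreducibility of the tunnel is a standard consequence of extremality of a finite-index irreducible subfactor, this is a minor point rather than a deep one, and the corollary reduces to a formal combination of \Cref{main} and \Cref{downwarddual}, exactly parallel to the preceding corollary.
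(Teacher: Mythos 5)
Your proof is correct and is essentially the paper's own argument: the paper disposes of this corollary by saying it follows ``similarly'' to the preceding one, i.e., exactly your combination of applying \Cref{main} to the quadruple $(N_{-1},Q_{-1},P_{-1},N)$ and to $(N,P,Q,M)$, and substituting $\lambda(P_{-1},Q_{-1})=\lambda(Q,P)$ and $\lambda(Q_{-1},P_{-1})=\lambda(P,Q)$ from \Cref{downwarddual}. Your check that the downward quadruple is irreducible is a point the paper leaves implicit, and it is valid; note only that the cleanest justification is that $N_{-1}^{\prime}\cap N$ is anti-isomorphic to $N^{\prime}\cap M$ (since $M=J_N N_{-1}^{\prime}J_N$ on $L^2(N)$), so no appeal to extremality is needed.
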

The following consequence is obvious once we  apply \Cref{main} and \Cref{prop}.

\begin{corollary}\label{prop2}
 Let $(N,P,Q,M)$ be a quadruple of type $II_1$ factors with $N^{\prime}\cap M=\C$ and $[M:N]<\infty$. If $[P:N]=2$ then $H(P,Q)$ is either $0$ or $\log 2.$
\end{corollary}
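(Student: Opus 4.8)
The plan is to read the result off directly from the two preceding results, \Cref{main} and \Cref{prop}, with essentially no new work required. First I would invoke \Cref{main} in the form obtained by interchanging the roles of $P$ and $Q$, which yields the identity $H(P|Q) = -\log\big(\lambda(P,Q)\big)$ for the irreducible quadruple at hand. The statement of \Cref{main} is phrased in terms of $H(Q|P)$ and $\lambda(Q,P)$, but the hypotheses ($N'\cap M = \C$ and $[M:N]<\infty$) are symmetric in $P$ and $Q$, so the companion formula for $H(P|Q)$ holds equally well; this is the only place where a moment's care is needed.

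Next, since $[P:N]=2$ by assumption, \Cref{prop} tells us that $\lambda(P,Q)$ takes one of exactly two values, namely $1$ or $1/2$. Substituting each into the relative-entropy formula then produces the two stated possibilities: when $\lambda(P,Q)=1$ we obtain $H(P|Q) = -\log 1 = 0$, and when $\lambda(P,Q)=1/2$ we obtain $H(P|Q) = -\log(1/2) = \log 2$. These two cases are exhaustive, which establishes the claim.

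There is no genuine obstacle here, as the corollary is a pure consequence of the machinery already in place: \Cref{main} does all of the analytic heavy lifting by relating the Connes--St\o rmer relative entropy to the Pimsner--Popa probabilistic constant, while \Cref{prop} pins down the only two admissible values of that constant under the index-two hypothesis. Beyond the symmetric reapplication of \Cref{main}, the sole thing to flag is the cosmetic point that the ``$H(P,Q)$'' appearing in the statement is to be read as the relative entropy $H(P|Q)$, consistent with the notation of \Cref{cs}.
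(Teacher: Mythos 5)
Your proposal is correct and follows exactly the route the paper intends: the paper's proof consists of the single remark that the corollary is obvious from \Cref{main} and \Cref{prop}, which is precisely your combination of the entropy formula (applied to the quadruple $(N,Q,P,M)$, i.e.\ with $P$ and $Q$ interchanged, which is legitimate since the hypotheses are symmetric) with the dichotomy $\lambda(P,Q)\in\{1,1/2\}$. Your added care about the interchange and about reading $H(P,Q)$ as $H(P|Q)$ only makes explicit what the paper leaves implicit.
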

We conclude the paper with a remark. 

\begin{remark}
We believe that $H(P|Q)$ is a powerful invariant to investigate the relative position between intermediate subfactors and therefore, it is desirable to know the possible values of it.  We remark that \Cref{main} is a modest step towards achieving this goal. In future we want to dig deep into these entropic aspects of intermediate subfactor theory.
\end{remark}
\color{black}
\section{Acknowledgement}
I would like to thank Zhengwei Liu for various useful discussions and pointing out an error in an earlier version.

\end{document}